\documentclass[letterpaper, 10 pt, conference]{ieeeconf}  %
\IEEEoverridecommandlockouts                              %
\newtheorem{theorem}{Theorem}

\newtheorem{prop}{\bfseries Proposition}

\newtheorem{remark}{\bfseries Remark}

\newtheorem{lemma}{\bfseries Lemma}
\newtheorem{assump}{\bfseries Assumption}
\newtheorem{corollary}{Corollary}
\usepackage{graphics} 
\usepackage{epsfig} 
\usepackage{mathptmx} 
\usepackage{times} 
\usepackage{amsmath} 
\usepackage{amssymb}  
\usepackage{mathtools}

\usepackage{algorithm}
\usepackage{algorithmic}
\usepackage{float} 
\usepackage{capt-of}
\usepackage{tcolorbox}
\usepackage{tabularx}
\usepackage{array}
\usepackage{xcolor}
\usepackage[caption=false, font=footnotesize]{subfig}

\usepackage{verbatim}
\usepackage{cite}

\makeatletter
\let\NAT@parse\undefined
\makeatother

\usepackage[
    colorlinks=true,
    linkcolor=teal,
    citecolor=blue,
    urlcolor=blue
]{hyperref}

\title{\LARGE \bf
Network Epidemic Control via Model Predictive Control: Extended Version
}

\author{Mahtab~Talaei, Alex~Olshevsky, Laura~F.~White, and Ioannis~Ch.~Paschalidis
\thanks{M. Talaei, A. Olshevsky, and I.Ch. Paschalidis are with the Department of Electrical and Computer Engineering, and the Division of Systems Engineering, Boston University, Boston, MA, USA.  E-mails: {\tt\small \{mtalaei, alexols, yannisp\}@bu.edu}. I.Ch. Paschalidis is also affiliated with the Department of Biomedical Engineering and the Faculty for Computing \& Data Sciences, Boston University, Boston, MA, USA.}
\thanks{L.F. White is with the School of Public Health, Boston University, Boston, MA, USA, e-mail: {\tt\small lfwhite@bu.edu}.}
}

\begin{document}

\maketitle
\thispagestyle{empty}
\pagestyle{empty}


\begin{abstract}
Balancing the societal costs of non-pharmaceutical interventions with epidemic suppression requires adaptive feedback control. 
Rather than relying on state-dependent operational caps, we formulate an infinite-horizon optimal control problem for a networked SIQR model that strictly enforces suppression via a hard spectral constraint on the transmission dynamics. We derive a safety-critical Model Predictive Control (MPC) approximation that embeds this spectral certificate stage-wise, yielding a tunable exponential decay rate. Furthermore, we construct a terminal set ensuring recursive feasibility and a feasible continuation that decays globally, proving positive invariance directly via the physical depletion of susceptibles rather than standard quadratic Lyapunov functions. To handle prediction uncertainty, we develop a robust counterpart that replaces nominal constraints by upper-envelope versions, recovering recursive feasibility and finite-horizon realized decay. We conclude by validating our approaches using simulation studies that leverage public data from counties in the state of Massachusetts.
\end{abstract}

\begin{keywords}
Epidemics, networked control systems, model predictive control, 
positive systems, spectral methods.
\end{keywords}

\section{INTRODUCTION}

Non-pharmaceutical interventions (NPIs), including isolation policies, distancing, and mobility restrictions, can substantially reduce transmission during emerging outbreaks~\cite{flaxman2020estimating, hsiang2020effect} but impose large societal and economic costs~\cite{thunstrom2020benefits}. Designing intervention policies that balance suppression with societal burden under time-varying conditions (seasonality, behavioral shifts, viral evolution~\cite{davies2021estimated}) has traditionally been approached via open-loop optimal control on compartmental models~\cite{ma2023optimal, lenhart2007optimal}. Open-loop policies, however, are sensitive to model mismatch: transmission parameters are uncertain~\cite{barnett2023epidemic}, observability is delayed, and policies must be revised as new data arrive. These issues are amplified in networked settings, where mobility coupling yields spatial heterogeneity and necessitates coordinated interventions~\cite{nowzari2016analysis}, motivating feedback strategies that adapt policies in real time.

Model Predictive Control (MPC) naturally provides such feedback~\cite{rawlings2017model} and has been actively explored for epidemic mitigation: robust MPC with bounded parameter uncertainty for networked SIS dynamics~\cite{watkins2019robust} and COVID-19 models~\cite{kohler2021robust}, scenario-based MPC that hedges across forecast realizations~\cite{herceg2025scenario}, 
stochastic and probabilistic MPC formulations~\cite{armaou2022designing}, nonlinear MPC with logic constraints for hospitalization capacity~\cite{peni2020nonlinear}, multi-region MPC for mobility-coupled networks~\cite{carli2020model}, and terminal-ingredient-free schemes that exploit viability-kernel structure of the SEIR equilibrium~\cite{esterhuizen2024mpc}. 
A parallel, model-agnostic line uses reinforcement learning 
to tune intervention policies directly from simulated 
data~\cite{gemignani2026real}.

Despite their methodological diversity, these works share a structural feature: \emph{safety is enforced as a 
state-dependent operational cap} (typically on hospital or 
ICU occupancy), \emph{or as a robust margin on the cost}, rather than as an explicit suppression certificate on the infection dynamics themselves. Operational caps do not preclude infection growth. Prevalence can grow while remaining below capacity and cost-based objectives do not, on their own, certify a guaranteed decay rate. 
We therefore start from a different premise: we formulate an 
infinite-horizon optimal control problem in which suppression 
is imposed as a hard spectral constraint on the 
infected-subsystem dynamics, with a tunable exponential decay 
rate $\alpha$, while isolation burden enters only through the 
cost. The proposed MPC is then derived as the tractable 
finite-horizon approximation of this formulation.

\noindent\textbf{Contributions.} 
Building on the networked SIQR model of~\cite{talaei2025network}, we make three contributions:
\textbf{(i)}~A stage-wise spectral suppression certificate embedded as a hard constraint in receding-horizon control. The certificate encodes infection decay at the level of the dynamics matrix independently of current prevalence, contrasting with state-dependent operational caps.
\textbf{(ii)}~Nominal recursive feasibility, finite-horizon closed-loop decay, and the existence of a feasible continuation achieving global exponential decay under this certificate; and a robust counterpart in which the nominal stage-wise and terminal constraints are replaced by upper-envelope versions, recovering recursive feasibility and finite-horizon realized decay under a strict relaxation of the exact-prediction hypothesis. The terminal-set construction follows standard MPC methodology~\cite{mayne2000constrained}, but our positive-invariance proof exploits the structural depletion of susceptibles rather than quadratic Lyapunov certificates.
\textbf{(iii)}~Numerical validation on a 14-county Massachusetts mobility network: under administrative rate limits and a variant-induced surge, the controller preserves the design decay rate, where state-dependent or smoothly-penalized formulations do not.

\noindent\textbf{Organization.} Sec.~\ref{sec:problem} introduces the model, spectral decay condition, and MPC formulation. Sec.~\ref{sec:main_results} establishes recursive feasibility, stability, and a robust extension under bounded forecast error. Sec.~\ref{sec:numerical_results} presents numerical results. Sec.~V concludes.

\section{PROBLEM FORMULATION AND MPC DESIGN}
\label{sec:problem}
\subsection{Networked SIQR Dynamics}\label{subsec:network_dynamics}
Consider a network of $n$ regions. Let $s_i(t)$, $x_i^{\mathrm{a}}(t)$, $x_i^{\mathrm{s}}(t)$, and $k_i(t)$ denote the susceptible, asymptomatic, symptomatic, and isolated population proportions at node $i$. The continuous-time SIQR dynamics with time-varying transmission are~\cite{talaei2025network}:
\begin{subequations}
\label{eq:SIQR-continuous}
\begin{align}
\dot{s}_i(t)
&= - s_i(t) \sum_{j=1}^{n} a_{ij}
\Bigl( \beta^{\mathrm{a}}(t) x_j^{\mathrm{a}}(t) + \beta^{\mathrm{s}}(t) x_j^{\mathrm{s}}(t) \Bigr),
\label{eq:SIQR-s} \\
\dot{x}_i^{\mathrm{a}}(t)
&= -\dot{s}_i(t) - \bigl( \epsilon + r^{\mathrm{a}} + q_i^{\mathrm{a}}(t) \bigr) x_i^{\mathrm{a}}(t),
\label{eq:SIQR-xa} \\
\dot{x}_i^{\mathrm{s}}(t)
&= \epsilon x_i^{\mathrm{a}}(t)
- \bigl( r^{\mathrm{s}} + q_i^{\mathrm{s}}(t) \bigr) x_i^{\mathrm{s}}(t),
\label{eq:SIQR-xs} \\
\dot{k}_i(t)
&= q_i^{\mathrm{a}}(t) x_i^{\mathrm{a}}(t) + q_i^{\mathrm{s}}(t) x_i^{\mathrm{s}}(t) - r^{\mathrm{q}} k_i(t).
\label{eq:SIQR-k}
\end{align}
\end{subequations}

\noindent\textbf{Notation \& Parameters.} Transmission rates $\beta(t) \coloneqq (\beta^{\mathrm{a}}(t), \beta^{\mathrm{s}}(t)) \in \mathbb{R}_{>0}^2$ are bounded by $\beta_{\max}$ for all $t \ge 0$. We define symptom onset rate $\epsilon > 0$, recovery rates $r^{\mathrm{a}}, r^{\mathrm{s}}, r^{\mathrm{q}} > 0$, and the mobility-derived infection flow matrix $A = [a_{ij}] \in \mathbb{R}_{\ge 0}^{n \times n}$. Control inputs (isolation rates) $q_i^{\mathrm{a}}(t), q_i^{\mathrm{s}}(t) \in [0, B]$ are aggregated as $q(t) \coloneqq (q^{\mathrm{a}}(t), q^{\mathrm{s}}(t)) \in \mathbb{R}^{2n}$. The full state $x(t) = (s, x^{\mathrm{a}}, x^{\mathrm{s}}, k) \in \mathbb{R}^{4n}$ satisfies $\dot{x}(t) = f(t, x(t), q(t))$ per~\eqref{eq:SIQR-continuous}.

\subsection{Spectral Condition for Exponential Decay}
\label{subsec:spectral_decay}

The \emph{infected state} $y(t) \coloneqq (x^{\mathrm{a}}(t), x^{\mathrm{s}}(t)) \in \mathbb{R}^{2n}_{\geq 0}$ satisfies the linear time-varying system $\dot{y}(t) = M\bigl(s(t), q(t) \mid \beta(t)\bigr) y(t)$, where the $2n \times 2n$ \emph{infected-subsystem matrix} is:
\begin{equation} \label{eq:M-matrix}
  M(s, q \mid \beta)
  \coloneqq
  \begin{pmatrix}
    E(s, q^{\mathrm{a}}, \beta^{\mathrm{a}})
    &
    \beta^{\mathrm{s}} \operatorname{diag}(s) A
    \\[4pt]
    \epsilon I_n
    &
    -\bigl(r^{\mathrm{s}} I_n + \operatorname{diag}(q^{\mathrm{s}})\bigr)
  \end{pmatrix},
\end{equation}
with upper-left block $E(s, q^{\mathrm{a}}, \beta^{\mathrm{a}}) = \beta^{\mathrm{a}} \operatorname{diag}(s) A - (\epsilon + r^{\mathrm{a}}) I_n - \operatorname{diag}(q^{\mathrm{a}})$.

\noindent\textbf{Structural Properties.} Because $A \ge 0$ and $\beta > 0$, $M$ is a Metzler matrix (nonnegative off-diagonals). Its upper block terms depend linearly on $s$ and $\beta$ with nonnegative coefficients, so $M$ is entry-wise non-decreasing in $s$, $\beta^{\mathrm{a}}$, and $\beta^{\mathrm{s}}$. Since $\dot{s}_i \le 0$, $M(s(t), q \mid \beta)$ is entry-wise nonincreasing in $t$ for fixed $q$ and $\beta$. Because of the Metzler structure, the stability of the infected subsystem is governed by the spectral abscissa $\lambda_{\max}(X) \coloneqq \max\{\Re(\lambda) : \lambda \in \sigma(X)\}$ of the matrix~$M$, where $\sigma(X)$ denotes the spectrum (set of eigenvalues) of~$X$.

\noindent\textbf{Exponential Decay Under Per-Step Constraints.}
In MPC, the isolation policies and parameter forecasts are updated in discrete steps. Fix a finite horizon $T > t_0$ and let $M_{\mathrm{tot}} \in \mathbb{N}$ satisfy $t_{M_{\mathrm{tot}}} \leq T < t_{M_{\mathrm{tot}}+1}$.

\begin{assump}
\label{assum:piecewise_inputs}
On each sampling interval $[t_m, t_{m+1})$ with $\Delta t = t_{m+1} - t_m$, the control $q(t) = q_m$ and transmission $\beta(t) = \beta_m$ are constant, with $\beta_m$ known at $t_m$.
\end{assump}

\begin{assump}\label{assum:coupling}
The directed graph of $A$ is strongly connected, and $s_i(t) > 0$ for all $i$ and $t \in [t_0, T]$.
\end{assump}

Let $M_m \coloneqq M(s(t_m), q_m \mid \beta_m)$. Because $s(t) \le s(t_m)$ for all $t \in [t_m, t_{m+1})$, we have $M\bigl(s(t), q_m \mid \beta_m\bigr) \le M_m$. This inter-sample monotonicity is crucial for continuous-time decay guarantees.

\begin{prop}
\label{prop:decay_eig_clean}
Under Ass.~\ref{assum:piecewise_inputs} and~\ref{assum:coupling},
suppose $\alpha > 0$ satisfies $\lambda_{\max}(M_m) \leq -\alpha$ for
$m \in \{0, \ldots, M_{\mathrm{tot}}\}$, and let $v_m > 0$ be the left
Perron eigenvector of $M_m$ normalized so that $\|v_m\|_1 = 1$. Then
the Perron-weighted norm contracts on each sampling interval,
\begin{equation}\label{eq:per_interval}
  v_m^\top y(t) \;\leq\; e^{-\alpha(t - t_m)}\, v_m^\top y(t_m),
  \qquad t \in [t_m, t_{m+1}),
\end{equation}
and consequently
\begin{equation}\label{eq:decay_T}
  \|y(t)\|_1
  \;\leq\;
  C_T\, e^{-\alpha(t - t_0)}\, \|y(t_0)\|_1,
  \qquad \forall t \in [t_0, T],
\end{equation}
with $C_T \coloneqq (\bar{v}_T/\underline{v}_T)^{M_{\mathrm{tot}}+1}$,
where $\bar{v}_T \coloneqq \max_{m,i} (v_m)_i$ and
$\underline{v}_T \coloneqq \min_{m,i} (v_m)_i > 0$.
\end{prop}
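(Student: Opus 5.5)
The plan is a comparison argument on each interval, using the left Perron eigenvector as a linear Lyapunov function, followed by chaining the intervals.

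\textbf{Perron vector.} By Assumption~\ref{assum:coupling}, $A$ is irreducible and $s(t_m)>0$. Since $\beta^{\mathrm{a}}_m,\beta^{\mathrm{s}}_m>0$ and $\epsilon>0$, the matrix $M_m$ is an irreducible Metzler matrix: every off-diagonal pattern of $A$ appears in both off-diagonal blocks, and the lower-left block $\epsilon I_n$ links the two halves. By Perron--Frobenius applied to $M_m+cI$ for $c$ large, $\lambda_{\max}(M_m)$ is a real simple eigenvalue. Its left eigenvector $v_m$ can be chosen strictly positive, which justifies the normalization $\|v_m\|_1=1$ and gives $\underline v_T>0$.

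\textbf{Per-interval contraction.} Fix $t\in[t_m,t_{m+1})$. Under Assumption~\ref{assum:piecewise_inputs}, $\dot y(t)=M(s(t),q_m\mid\beta_m)\,y(t)$. Since $y(t)\ge 0$, the inter-sample monotonicity $M(s(t),q_m\mid\beta_m)\le M_m$ holds entrywise. Together with $v_m>0$ this gives
\[
\tfrac{d}{dt}\,v_m^\top y = v_m^\top M(s(t),q_m\mid\beta_m)\,y \le v_m^\top M_m\,y = \lambda_{\max}(M_m)\,v_m^\top y \le -\alpha\, v_m^\top y.
\]
The inequality step uses $v_m^\top\bigl(M_m - M(s(t),q_m\mid\beta_m)\bigr)\ge 0$ entrywise, multiplied by $y\ge 0$.

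Nonnegativity of $y$ must be justified separately. The $y$-system is linear with Metzler time-varying matrix, so the nonnegative orthant is forward invariant. Grönwall's inequality applied to the scalar $v_m^\top y$ then yields~\eqref{eq:per_interval}. The state is continuous at $t_{m+1}$, so the bound extends to the closed interval by taking the limit.

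\textbf{Chaining.} Since $v_m^\top y\in[\underline v_T\|y\|_1,\ \bar v_T\|y\|_1]$ for $y\ge0$, each change of weighting vector at a sampling instant costs a factor
\[
v_{m+1}^\top y(t_{m+1}) \le \bar v_T\|y(t_{m+1})\|_1 \le (\bar v_T/\underline v_T)\, v_m^\top y(t_{m+1}).
\]
Inducting over the intervals gives, for $t\in[t_m,t_{m+1})$,
\[
\|y(t)\|_1 \le \underline v_T^{-1}\, v_m^\top y(t) \le \underline v_T^{-1}\,(\bar v_T/\underline v_T)^{m}\, e^{-\alpha(t-t_0)}\, v_0^\top y(t_0).
\]
Finally, $v_0^\top y(t_0)\le \bar v_T\|y(t_0)\|_1$. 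This produces at most $m+1\le M_{\mathrm{tot}}+1$ factors of $\bar v_T/\underline v_T\ge1$, which gives the constant $C_T$ in~\eqref{eq:decay_T}.

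\textbf{Main obstacle.} The main subtlety is the comparison step. It needs nonnegativity of $y$ and strict positivity of $v_m$, which requires irreducibility of $M_m$ rather than mere Metzler structure. I would check irreducibility directly from the block structure: the lower-left block $\epsilon I_n$ and the upper-right block $\beta^{\mathrm{s}}_m\operatorname{diag}(s(t_m))A$ connect the asymptomatic and symptomatic copies of each node, while the strongly connected graph of $A$ connects the nodes to each other.
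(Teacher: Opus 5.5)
Your proposal is correct and follows essentially the same route as the paper. Both use the left Perron vector of the irreducible Metzler matrix $M_m$ as a linear Lyapunov function on each sampling interval, the comparison $v_m^\top M(s(t),q_m\mid\beta_m)\,y\le v_m^\top M_m\,y$ from susceptible depletion, Gr\"onwall, and one factor $\bar v_T/\underline v_T$ per switch. The only difference is bookkeeping: you chain $v_m^\top y(t_m)$ across sampling instants and convert to $\|\cdot\|_1$ only at the ends, while the paper chains $\|y(t_m)\|_1$, and both give $(\bar v_T/\underline v_T)^{m+1}\le C_T$.
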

\begin{proof}
Let $t \in [t_m, t_{m+1})$. By Ass.~\ref{assum:coupling}, $M_m$ is irreducible Metzler. Let $v_m > 0$ be the left Perron eigenvector satisfying $v_m^\top M_m = \lambda_{\max}(M_m) v_m^\top$ with $\|v_m\|_1 = 1$. Let $V_m(t) \coloneqq v_m^\top y(t)$. By monotonicity, $\dot{V}_m(t) = v_m^\top M(s(t), q_m \mid \beta_m) y(t) \le v_m^\top M_m y(t) \le -\alpha V_m(t)$. Grönwall's inequality yields~\eqref{eq:per_interval}. Since $v_m$ is normalized and entrywise positive, $\underline{v}_T \|y(t)\|_1 \leq V_m(t) \leq \bar{v}_T \|y(t)\|_1$, so $\|y(t)\|_1 \le (\bar{v}_T/\underline{v}_T)\, e^{-\alpha(t-t_m)} \|y(t_m)\|_1$. Taking the left-limit $t\uparrow t_m$ in the estimate
on $[t_{m-1},t_m)$ gives $\|y(t_m)\|_1 \le (\bar{v}_T/\underline v_T)\, e^{-\alpha\Delta t} \|y(t_{m-1})\|_1$; iterating yields $\|y(t_m)\|_1 \le (\bar{v}_T/\underline v_T)^m
e^{-\alpha(t_m-t_0)}\|y(t_0)\|_1$ for all $m\in\{0,\dots,M_{\mathrm{tot}}\}$. Substituting into the inter-sample bound for $t\in[t_m,t_{m+1})$ gives
$\|y(t)\|_1 \le (\bar{v}_T/\underline v_T)^{m+1}
e^{-\alpha(t-t_0)}\|y(t_0)\|_1$, and $m\le M_{\mathrm{tot}}$
yields~\eqref{eq:decay_T}.
\end{proof}

\begin{remark}[Structure of $C_T$]
\label{rem:CT_structure}
The constant $C_T = (\bar{v}_T/\underline{v}_T)^{M_{\mathrm{tot}}+1}$ accounts for the potential misalignment of Perron eigenvectors when the matrices $M_m$ switch at sampling instants. Because $\underline{v}_T < \bar{v}_T$ in general, $C_T$ may grow with $M_{\mathrm{tot}}$ (and hence with $T$). Thus, Prop.~\ref{prop:decay_eig_clean} provides a \emph{finite-horizon} guarantee on $[t_0, T]$, justifying the per-step MPC constraint~\cite{talaei2025network}. Global exponential decay ($t \to \infty$) is established separately in Theorem~\ref{thm:global_exp_decay} by exhibiting a feasible continuation with a single terminal Lyapunov vector.
\end{remark}

\subsection{Optimal Control Formulation}

The infinite-horizon optimal control problem is formulated as:
\begin{equation} \label{eq:OCP-infinite}
\begin{aligned}
  \min_{q(\cdot)}
  &\int_{0}^{\infty} \left[ 
      \sum_{i=1}^{n} w_i k_i(t) 
      + \frac{\rho}{2} \sum_{i=1}^{n} w_i \bigl((q_i^{\mathrm{a}}(t))^2 + (q_i^{\mathrm{s}}(t))^2\bigr)
    \right] dt \\
  \text{s.t.}\quad
  & \dot{x}(t) = f\bigl(t, x(t), q(t)\bigr), \quad x(0) = x_0, \\
  & \lambda_{\max}\bigl(M(s(t), q(t) \mid \beta(t))\bigr) \leq -\alpha, \quad \forall t \geq 0, \\
  & 0 \leq q_i^{\mathrm{a}}(t), q_i^{\mathrm{s}}(t) \leq B, \quad \forall i \in \{1,\ldots,n\},\ \forall t \geq 0,
\end{aligned}
\end{equation}
where $w_i \coloneqq N_i / \sum_{j} N_j$ weights the normalized isolation burden by regional population, and $\rho > 0$ penalizes control effort. For admissible policies $q(t) \in [0,B]^{2n}$, the spectral constraint ensures exponential infection decay (Sec.~\ref{subsec:stability}). Consequently, $k(t) \to 0$ via~\eqref{eq:SIQR-k}. The control effort term is finite only if susceptible depletion eventually allows $q = 0$; in practice, the 
controller terminates once infections are negligible, so the realized cost is always finite. Problem~\eqref{eq:OCP-infinite} establishes the baseline costs and constraints inherited by the MPC in Sec.~\ref{subsec:MPC_formulation}.

\noindent\textbf{Computational Challenge.} Problem~\eqref{eq:OCP-infinite} is infinite-dimensional and non-convex. Although $\lambda_{\max}(\cdot)$ is convex in the diagonal control entries $q$ for fixed $s$ and $\beta$~\cite{1981:Cohen}, the trajectory $s(t)$ depends nonlinearly on $q$ through~\eqref{eq:SIQR-continuous}. This coupled non-convexity makes classical infinite-horizon optimization intractable for moderate network sizes, motivating the discrete-time finite-horizon approximation derived below.


\subsection{Finite-Horizon MPC Formulation}
\label{subsec:MPC_formulation}

To obtain a computationally tractable controller, we discretize \eqref{eq:OCP-infinite} using a sampling period $\Delta t > 0$ and adopt a receding-horizon strategy. At each sampling time $t_m$, we solve a finite-horizon approximation over $[t_m, t_m + H\Delta t)$ under piecewise-constant inputs (Ass.~\ref{assum:piecewise_inputs}), where $H \in \mathbb{N}$ is the prediction horizon. Let $q_j \coloneqq (q^{\mathrm{a}}_j, q^{\mathrm{s}}_j)$ denote the constant control vector applied over the prediction interval $[t_{m,j}, t_{m,j+1})$, where $t_{m,j} \coloneqq t_m + j\Delta t$. Let $\hat{x}_j$ denote the predicted state vector at $t_{m,j}$, with $\hat{x}_0 = x(t_m)$. Only the first control action $q_0$ is applied; the state is measured at $t_{m+1}$, and the process repeats.

To enable preemptive control, the MPC utilizes a transmission forecast profile $\Gamma_m \coloneqq \{\hat{\beta}_{m,j}\}_{j=0}^{H-1}$, available at time $t_m$. Anticipated surges due to seasonality, 
holidays, or policy changes are reflected in $\Gamma_m$.

We define the stage cost penalizing the isolated population and control effort as
$ \ell(\hat{x}_j, q_j) \coloneqq
  \sum_{i=1}^{n} w_i \hat{k}_{i,j} +
  \frac{\rho}{2} \sum_{i=1}^{n} w_i \bigl((q^{\mathrm{a}}_{i,j})^2 + (q^{\mathrm{s}}_{i,j})^2\bigr),$
where $\hat{k}_{i,j}$ is the $i$-th component of the isolated population in $\hat{x}_j$. The finite-horizon cost is therefore approximated by
\begin{equation} \label{eq:MPC-cost-def}
  J_m\bigl(\{q_j\}_{j=0}^{H-1}\bigr) 
  \;\coloneqq\; 
  \sum_{j=0}^{H-1} \ell(\hat{x}_j, q_j)\,\Delta t
  \;+\; V_f(\hat{x}_H).
\end{equation}
Here, term $\ell(\cdot)\Delta t$ represents the accumulated cost over one sampling interval. The terminal cost $V_f(\hat{x}_H) = \frac{1}{2} \hat{x}_H^\top P \hat{x}_H$ ($P \succeq 0$) penalizes residual infections beyond the prediction window. While $V_f$ improves performance, system stability is strictly enforced by the spectral constraints rather than the value function.

The MPC problem solved at time $t_m$ is:
\begin{subequations}\label{eq:MPC-problem}
\begin{align}
  \min_{\{q_j\}_{j=0}^{H-1}} 
  & J_m\bigl(\{q_j\}_{j=0}^{H-1}\bigr)
    \label{eq:MPC-obj} \\
  \text{s.t.}\
  & \hat{x}_{j+1} = \Psi(\hat{x}_j, q_j \mid \hat{\beta}_{m,j}),
    \quad j = 0,\ldots,H-1,
    \label{eq:MPC-dynamics} \\
  & \lambda_{\max}\!\bigl(M(\hat{s}_j, q_j \mid \hat{\beta}_{m,j})\bigr)
    \leq -\alpha,
    \; j = 0,\ldots,H-1,
    \label{eq:MPC-eig-constraint} \\
  & 0 \;\leq\; q_{i,j}^{\mathrm{a}}, q_{i,j}^{\mathrm{s}} \;\leq\; B,
    \quad \forall i, \, j = 0,\ldots,H-1,
    \label{eq:MPC-bounds} \\
  & \hat{x}_0 \;=\; x(t_m),
    \label{eq:MPC-init} \\
  & \hat{x}_H \;\in\; \mathcal{X}_f.
    \label{eq:MPC-terminal}
\end{align}
\end{subequations}

\noindent\textbf{Constraint Details.}
$\Psi(\cdot \mid \hat{\beta})$ in~\eqref{eq:MPC-dynamics} denotes the numerical integration of the continuous dynamics~\eqref{eq:SIQR-continuous} over $\Delta t$. Although~\eqref{eq:MPC-eig-constraint} is enforced only at discrete steps $t_{m,j}$, the monotonicity $s(t) \leq \hat{s}_j$ ensures continuous satisfaction throughout $[t_{m,j}, t_{m,j+1})$ (Prop.~\ref{prop:decay_eig_clean}). The terminal set $\mathcal{X}_f \subset \mathbb{R}^{4n}$ in~\eqref{eq:MPC-terminal} contains all states stabilizable by a constant safe bounding policy and is defined in Sec.~\ref{subsec:terminal_set}.

\begin{remark}[Problem structure]
\label{rem:problem_structure}
Problem~\eqref{eq:MPC-problem} is a finite-dimensional NLP in $2nH$ decision variables. It is non-convex due to bilinear infection dynamics and the dependence of the predicted susceptible profile $\hat{s}_j$ on earlier controls. At $j=0$, however, $\hat{s}_0 = s(t_m)$ is fixed, making the initial constraint convex. Despite this non-convexity, recursive feasibility (Theorem~\ref{thm:rec_feasibility}) ensures the shifted previous solution appended with the terminal policy is a strictly feasible warm start. Thus, any feasible solution returned by a local solver inherits the safety guarantees (Sec.~\ref{subsec:computation}).
\end{remark}


\subsection{Terminal Set Design and Verification}
\label{subsec:terminal_set}

This section defines a terminal set $\mathcal{X}_f$ and an admissible terminal control law $\kappa_f(\cdot)$ for~\eqref{eq:MPC-terminal}. While the reliance on a terminal set to ensure recursive feasibility follows standard MPC methodology~\cite{mayne2000constrained}, our construction is a tailored application leveraging the specific properties of the SIQR network. Specifically, positive invariance of $\mathcal{X}_f$ follows directly from the physical depletion of susceptibles ($\dot s \le 0$) and the monotonicity of the Metzler matrix $M$, bypassing the need for quadratic Lyapunov or LMI certificates typical in robust MPC.

\noindent\textbf{Terminal Control Law \& Terminal Set Definition.}
We employ a constant terminal policy corresponding to maximum isolation effort across all regions, defined as $\kappa_f(x) \coloneqq q_{\text{safe}} = (B, \dots, B)^\top \in \mathbb{R}^{2n}$. To ensure robustness to the worst-case transmission scenario, the terminal set $\mathcal{X}_f$ is defined as the set of states where the spectral decay constraint holds under $q_{\text{safe}}$ and the global upper bound $\beta_{\max}$:
\begin{equation}\label{eq:terminal_set_def}
  \mathcal{X}_f
  \;\coloneqq\;
  \Bigl\{\, x \in \mathbb{R}^{4n}
  : \lambda_{\max}\!\bigl(M(s(x), q_{\text{safe}} \mid \beta_{\max})\bigr) \leq -\alpha \Bigr\},
\end{equation}
where $s(x)$ is the susceptible vector of state $x$. Geometrically, if the control bound $B$ is sufficient to stabilize the worst-case scenario at full susceptibility (i.e., $s(x) = \mathbf{1}_n$), then $\mathcal{X}_f$ spans the entire admissible state space. Otherwise, it defines a safe invariant region where susceptibility has depleted enough to make containment possible.

\noindent\textbf{Positive Invariance.}
To establish invariance, the discrete-time numerical integration must preserve the natural monotonicity of the susceptible population.

\begin{assump}[Discretization consistency]
\label{assum:discretization}
The numerical integration map $\Psi$ in~\eqref{eq:MPC-dynamics} preserves susceptible monotonicity:
$s(\Psi(x, q \mid \beta)) \leq s(x)$ component-wise for all admissible $x$, $q$, and $\beta$.
\end{assump}
This naturally holds for standard ODE solvers (e.g., Runge--Kutta) with sufficiently small step sizes since $\dot{s}_i \leq 0$.

\begin{lemma}[Terminal set invariance]
\label{lem:terminal_invariance}
Under Ass.~\ref{assum:discretization}, the set $\mathcal{X}_f$ is positively invariant under the terminal 
control $\kappa_f$ for any $\beta \leq \beta_{\max}$:
\begin{equation}\label{eq:terminal_invariance_dt}
  x \in \mathcal{X}_f \;\;\text{and}\;\; 
  \beta \leq \beta_{\max}
  \; \Longrightarrow\;
  \Psi(x, q_{\text{safe}} \mid \beta) \in \mathcal{X}_f.
\end{equation}
\end{lemma}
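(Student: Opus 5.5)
The plan is to reduce invariance to monotonicity of the spectral abscissa of Metzler matrices. Note first that the set $\mathcal{X}_f$ in~\eqref{eq:terminal_set_def} depends on the state only through $s(x)$. The test matrix $M(\cdot, q_{\text{safe}} \mid \beta_{\max})$ uses the fixed worst-case $\beta_{\max}$, not the realized $\beta$. So the actual transmission $\beta \le \beta_{\max}$ used in the step $\Psi(x, q_{\text{safe}} \mid \beta)$ matters only in that it produces some successor state $x^+ \coloneqq \Psi(x, q_{\text{safe}} \mid \beta)$. Ass.~\ref{assum:discretization} applies to every admissible $\beta$ and gives $s(x^+) \le s(x)$ componentwise. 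It therefore suffices to show the implication
\[
  s' \le s \;\Longrightarrow\; \lambda_{\max}\bigl(M(s', q_{\text{safe}} \mid \beta_{\max})\bigr) \le \lambda_{\max}\bigl(M(s, q_{\text{safe}} \mid \beta_{\max})\bigr).
\]

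\textbf{Step 1 (entrywise order).} From~\eqref{eq:M-matrix}, only the blocks $\beta^{\mathrm{a}}_{\max}\operatorname{diag}(s)A$ and $\beta^{\mathrm{s}}_{\max}\operatorname{diag}(s)A$ depend on $s$, and they do so linearly with nonnegative coefficients, since $A \ge 0$ and $\beta_{\max} > 0$. The diagonal shifts involving $q_{\text{safe}}$, $\epsilon$ and $r^{\mathrm{a}}, r^{\mathrm{s}}$ are unchanged. Hence $M' \coloneqq M(s', q_{\text{safe}} \mid \beta_{\max}) \le M \coloneqq M(s, q_{\text{safe}} \mid \beta_{\max})$ entrywise, as noted in the Structural Properties paragraph, and both matrices are Metzler.

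\textbf{Step 2 (monotonicity of the spectral abscissa).} Choose $c \ge 0$ large enough that $M + cI$ and $M' + cI$ are both nonnegative. Then $0 \le M' + cI \le M + cI$. By Perron--Frobenius monotonicity of the spectral radius on nonnegative matrices (via $\rho(X) = \lim_k \|X^k\|^{1/k}$ and $0 \le X^k \le Y^k$), we get $\rho(M'+cI) \le \rho(M+cI)$. For Metzler $X$ one has $\lambda_{\max}(X) = \rho(X + cI) - c$, because the Perron root of $X + cI$ is real and dominant. This gives $\lambda_{\max}(M') \le \lambda_{\max}(M) \le -\alpha$. The argument needs no irreducibility, so Ass.~\ref{assum:coupling} is not required and zero components of $s$ cause no difficulty.

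\textbf{Step 3 (conclude).} Combining the steps gives $x^+ \in \mathcal{X}_f$, which is~\eqref{eq:terminal_invariance_dt}. The only delicate points are these: the terminal certificate must be evaluated at the fixed $\beta_{\max}$ rather than the realized $\beta$; Ass.~\ref{assum:discretization} must hold uniformly in $\beta$; and the spectral-abscissa monotonicity must be stated for reducible Metzler matrices. I expect Step 2 to be the main point to state carefully, although it is standard.
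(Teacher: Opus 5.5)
Your proposal is correct and follows the paper's proof essentially step for step. Ass.~\ref{assum:discretization} gives $s(x^+)\le s(x)$, entrywise monotonicity of $M$ in $s$ at the fixed $\beta_{\max}$ gives $M(s(x^+),q_{\text{safe}}\mid\beta_{\max})\le M(s(x),q_{\text{safe}}\mid\beta_{\max})$, and monotonicity of $\lambda_{\max}$ on Metzler matrices closes the argument. The only difference is that you derive that last monotonicity (via the shift $X+cI$ and Gelfand's formula, which needs a norm that is monotone on nonnegative matrices) instead of citing Berman--Plemmons, and this makes explicit that irreducibility is not needed.
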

\begin{proof}
Let $x \in \mathcal{X}_f$ and $x^+ = \Psi(x, q_{\text{safe}} \mid \beta)$. By Ass.~\ref{assum:discretization}, $s(x^+) \leq s(x)$ component-wise. Since $M$ is entry-wise non-decreasing in $s$ (Sec.~\ref{subsec:spectral_decay}), we have $M(s(x^+), q_{\text{safe}} \mid \beta_{\max}) \leq M(s(x), q_{\text{safe}} \mid \beta_{\max})$. By monotonicity of $\lambda_{\max}(\cdot)$ for Metzler matrices~\cite{berman1994nonnegative} and the fact that $x \in \mathcal{X}_f$, it follows that $\lambda_{\max}\bigl(M(s(x^{+}), q_{\text{safe}} \mid \beta_{\max})\bigr) \le \lambda_{\max}\bigl(M(s(x), q_{\text{safe}} \mid \beta_{\max})\bigr) \le -\alpha$. Hence, $x^+ \in \mathcal{X}_f$.
\end{proof}

\section{THEORETICAL GUARANTEES} \label{sec:main_results}
To clarify the scope of our theoretical results, we decompose the MPC guarantees into three layers requiring progressively stronger hypotheses. \textbf{(i) Per-step decay (unconditional):} Feasibility of~\eqref{eq:MPC-problem} at $t_m$ implies $\lambda_{\max}(M(s(t_m), q^*_{m,0} \mid \beta(t_m))) \leq -\alpha$ at the applied control. Combined with the true plant's natural depletion ($\dot{s}_i \leq 0$), this ensures strict exponential decay of infections on $[t_m, t_{m+1})$ \emph{regardless of forecast quality over the rest of the horizon}. \textbf{(ii) Recursive feasibility and global decay (nominal):} Theorems~\ref{thm:rec_feasibility}--\ref{thm:global_exp_decay} establish closed-loop feasibility and stability under exact prediction matching (Ass.~\ref{assum:forecast_consistency}). This layer provides standard nominal MPC guarantees; we explicitly quantify its robustness to forecast errors in Sec.~\ref{subsec:robustness}. \textbf{(iii) Cost optimality:} Forecast quality strictly affects the objective value~\eqref{eq:MPC-obj}; degraded forecasts yield suboptimal but \emph{still safe} controls via Layer (i). In short: Layer (i) uses current measurements for unconditional safety; Layer (ii) uses predictions to guarantee a feasible continuation; Layer (iii) uses predictions to score that continuation.

\subsection{Recursive Feasibility}
\label{subsec:recursive_feasibility}
Recursive feasibility ensures the MPC algorithm never encounters an unsatisfiable constraint. To establish nominal guarantees, we introduce the following consistency assumption.

\begin{assump}[Nominal prediction] \label{assum:forecast_consistency}
The prediction model $\Psi$ in~\eqref{eq:MPC-dynamics} exactly represents the plant dynamics~\eqref{eq:SIQR-continuous}, and the transmission forecast matches the true parameters over the horizon, i.e., $\hat{\beta}_{m,j} = \beta(t_m + j\Delta t)$ for all $j = 0, \ldots, H-1$.
\end{assump}

\begin{theorem}[Recursive Feasibility]\label{thm:rec_feasibility}
Let Ass.~\ref{assum:piecewise_inputs}--\ref{assum:forecast_consistency} hold. If the MPC Problem~\eqref{eq:MPC-problem} is feasible at time $t_m$, then it remains feasible at time $t_{m+1} = t_m + \Delta t$.
\end{theorem}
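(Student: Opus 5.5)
The plan is the standard shift-and-append argument, with the terminal ingredients of Sec.~\ref{subsec:terminal_set} doing the work. Let $\{q^*_{m,j}\}_{j=0}^{H-1}$ be a feasible (not necessarily optimal) solution at $t_m$, with predicted states $\hat{x}^*_{m,j}$, $j=0,\dots,H$. As the candidate at $t_{m+1}$ we take
\[
\tilde q_j \coloneqq q^*_{m,j+1}, \quad j=0,\dots,H-2, \qquad \tilde q_{H-1}\coloneqq q_{\text{safe}},
\]
and we show that it satisfies every constraint of~\eqref{eq:MPC-problem} posed at $t_{m+1}$.

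\textbf{Step 1: the shifted predicted trajectory is realized.} By Ass.~\ref{assum:forecast_consistency}, $\Psi$ is exact and $\hat\beta_{m,0}=\beta(t_m)$, so applying $q^*_{m,0}$ gives $x(t_{m+1})=\Psi(x(t_m),q^*_{m,0}\mid\beta(t_m))=\hat{x}^*_{m,1}$. Hence~\eqref{eq:MPC-init} at $t_{m+1}$ starts the new prediction at $\hat x^*_{m,1}$. The new forecast satisfies $\hat\beta_{m+1,j}=\beta(t_{m+1}+j\Delta t)=\hat\beta_{m,j+1}$ for $j\le H-2$, again because both forecasts equal the true parameters. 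By induction on $j$, the new predicted states are $\tilde x_j=\hat x^*_{m,j+1}$ for $j=0,\dots,H-1$. In particular $\tilde x_{H-1}=\hat x^*_{m,H}\in\mathcal X_f$.

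\textbf{Step 2: stage constraints for $j\le H-2$.} With identical states, controls and forecasts, the constraints~\eqref{eq:MPC-eig-constraint} and~\eqref{eq:MPC-bounds} for $j=0,\dots,H-2$ are exactly those of stages $1,\dots,H-1$ of the problem at $t_m$. They therefore hold by the assumed feasibility.

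\textbf{Step 3: the appended stage and the terminal set (main obstacle).} At $j=H-1$ we need $\lambda_{\max}(M(\tilde s_{H-1},q_{\text{safe}}\mid\hat\beta_{m+1,H-1}))\le-\alpha$, where $\hat\beta_{m+1,H-1}=\beta(t_m+H\Delta t)$ is a forecast value the old problem never used. Since $\hat\beta_{m+1,H-1}\le\beta_{\max}$ entrywise and $M$ is entrywise nondecreasing in $\beta$, we have $M(\tilde s_{H-1},q_{\text{safe}}\mid\hat\beta_{m+1,H-1})\le M(\tilde s_{H-1},q_{\text{safe}}\mid\beta_{\max})$. Monotonicity of the spectral abscissa of Metzler matrices~\cite{berman1994nonnegative} then gives the bound $\le-\alpha$, by membership $\tilde x_{H-1}\in\mathcal X_f$ and the definition~\eqref{eq:terminal_set_def}. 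This is why the terminal set is built with $\beta_{\max}$. The bound constraint holds trivially because $q_{\text{safe}}=(B,\dots,B)$. Finally, Lemma~\ref{lem:terminal_invariance} applied with $\beta=\hat\beta_{m+1,H-1}\le\beta_{\max}$ gives $\tilde x_H=\Psi(\tilde x_{H-1},q_{\text{safe}}\mid\hat\beta_{m+1,H-1})\in\mathcal X_f$, so~\eqref{eq:MPC-terminal} holds.

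Ass.~\ref{assum:coupling} is needed only so that the Perron/monotonicity facts apply; Ass.~\ref{assum:discretization} enters through Lemma~\ref{lem:terminal_invariance}. The one subtle point is the bookkeeping in Step 1: the identification of $\tilde x_j$ with $\hat x^*_{m,j+1}$ relies on the forecast alignment $\hat\beta_{m+1,j}=\hat\beta_{m,j+1}$, which Ass.~\ref{assum:forecast_consistency} supplies.
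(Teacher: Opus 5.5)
Your proposal is correct and follows the paper's proof step for step. You use the same shifted candidate with $q_{\text{safe}}$ appended, identify $\tilde x_{m+1,j}=\hat x^*_{m,j+1}$ via Ass.~\ref{assum:forecast_consistency}, bound stage $H-1$ by monotonicity in $\beta$ against the $\beta_{\max}$-based terminal set, and invoke Lemma~\ref{lem:terminal_invariance} for the terminal constraint; your explicit tracking of $\hat\beta_{m+1,j}=\hat\beta_{m,j+1}$ and $\hat\beta_{m+1,H-1}\le\beta_{\max}$ simply spells out what the paper compresses into one line.
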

\begin{proof}
Let $\{q^*_{m,j}\}_{j=0}^{H-1}$ and 
$\{\hat{x}^*_{m,j}\}_{j=0}^{H}$ be the optimal control and 
state sequences at $t_m$, satisfying feasibility of~\eqref{eq:MPC-problem}. By Ass.~\ref{assum:forecast_consistency}, 
$x(t_{m+1}) = \hat{x}^*_{m,1}$. Construct the candidate 
sequence for $t_{m+1}$:
\[
  \tilde{q}_{m+1,j} \coloneqq
  \begin{cases}
    q^*_{m,j+1}, & j = 0, \ldots, H{-}2, \\
    q_{\text{safe}}, & j = H{-}1.
  \end{cases}
\]
By forecast consistency, the candidate trajectory satisfies $\tilde{x}_{m+1,j} = \hat{x}^*_{m,j+1}$ for $j < H$, meaning $\tilde{x}_{m+1,H-1} = \hat{x}^*_{m,H} \in \mathcal{X}_f$. We verify candidate feasibility step-by-step. First, the shifted inputs and $q_{\text{safe}}$ satisfy the box constraints by definition. Second, for $j < H-1$, the spectral constraint holds because it coincides perfectly with step $j+1$ of the previously feasible problem. Third, for $j = H-1$, since $\tilde{x}_{m+1,H-1} \in \mathcal{X}_f$, the terminal set definition and the monotonicity of $M$ in $\beta$ (Sec.~\ref{subsec:spectral_decay}) guarantee $\lambda_{\max}(M(\tilde{s}_{m+1,H-1}, q_{\text{safe}} \mid \hat{\beta}_{m+1,H-1})) \leq \lambda_{\max}(M(\tilde{s}_{m+1,H-1}, q_{\text{safe}} \mid \beta_{\max})) \leq -\alpha$. Finally, Lemma~\ref{lem:terminal_invariance} ensures $\tilde{x}_{m+1,H} \in \mathcal{X}_f$.
\end{proof}

\begin{remark}[Scope of the nominal hypothesis]
\label{rem:nominal_vs_robust}
Ass.~\ref{assum:forecast_consistency} is a standard nominal hypothesis in MPC stability theory, ensuring the shifted candidate sequence remains feasible. Crucially, this strong assumption is required \emph{only} for the forward-looking recursive argument, not for the safety of the currently applied step. Because the applied control $q^*_{m,0}$ relies strictly on the measured state $s(t_m)$ and parameter $\beta_m$, the unconditional Layer (i) guarantee ensures inter-sample decay independently of future forecast mismatch. In practice, the MPC routinely recalibrates $\Psi$ and $\hat{\beta}$ at each step; while this may perturb nominal recursive feasibility, it never retroactively invalidates the safety of the applied step. To formally bridge this gap, Sec.~\ref{subsec:robustness} provides a robust counterpart in which the nominal constraints are replaced by upper-envelope versions, recovering recursive feasibility and finite-horizon realized decay under bounded forecast error, alongside the
structural robustness already provided by the $\beta_{\max}$ terminal set.
\end{remark}

\subsection{Closed-Loop Stability} \label{subsec:stability}
We establish stability in two layers: finite-horizon decay via the spectral constraint (Theorem~\ref{thm:stability_finiteT}), and global convergence via a feasible backup continuation (Theorem~\ref{thm:global_exp_decay}).

\begin{theorem}[Finite stability under the MPC closed loop]
\label{thm:stability_finiteT}
Let Ass.~\ref{assum:piecewise_inputs}--\ref{assum:forecast_consistency}
hold and suppose Problem~\eqref{eq:MPC-problem} is feasible at $t_0$.
Then $\forall T > t_0$, the closed-loop infected state satisfies
\begin{equation}\label{eq:cl_decay_finiteT}
  \|y(t)\|_1 \leq C_T\, e^{-\alpha(t-t_0)}\, \|y(t_0)\|_1,
  \qquad \forall\, t \in [t_0, T],
\end{equation}
with $C_T = (\bar{v}_T/\underline{v}_T)^{M_{\mathrm{tot}}+1}$ as in Prop.~\ref{prop:decay_eig_clean}, where $v_m$ is the normalized left Perron
eigenvector of the realized matrix
$M(s(t_m), q^*_{m,0}\mid\beta(t_m))$.
\end{theorem}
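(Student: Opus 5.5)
The plan is to reduce the statement to Prop.~\ref{prop:decay_eig_clean}, applied to the closed-loop sequence of realized matrices $M_m \coloneqq M(s(t_m), q^*_{m,0}\mid\beta(t_m))$ for $m \in \{0,\ldots,M_{\mathrm{tot}}\}$. Two things must be checked: (a) the MPC is feasible at every sampling instant $t_m \le T$, so that $q^*_{m,0}$ exists; and (b) each realized $M_m$ satisfies $\lambda_{\max}(M_m)\le -\alpha$ and fits the hypotheses of the proposition. Once these hold, both~\eqref{eq:per_interval} and~\eqref{eq:decay_T} transfer verbatim, with the same constant $C_T$.

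\textbf{Step 1 (feasibility for all $m$).} Feasibility at $t_0$ is assumed. Theorem~\ref{thm:rec_feasibility}, applied inductively, gives feasibility at $t_1, t_2, \ldots, t_{M_{\mathrm{tot}}}$. Hence at each $t_m$ there is an optimal (or at least feasible) first input $q^*_{m,0}\in[0,B]^{2n}$. The closed loop applies this input on $[t_m,t_{m+1})$, so Ass.~\ref{assum:piecewise_inputs} holds with $q_m = q^*_{m,0}$ and $\beta_m=\beta(t_m)$.

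\textbf{Step 2 (realized spectral bound).} Constraint~\eqref{eq:MPC-eig-constraint} at $j=0$ involves $\hat s_0 = s(t_m)$ by~\eqref{eq:MPC-init}, and $\hat\beta_{m,0}=\beta(t_m)$ by Ass.~\ref{assum:forecast_consistency}. It therefore reads exactly $\lambda_{\max}(M_m)\le-\alpha$. I would note that the $j=0$ case only needs $\hat\beta_{m,0}=\beta_m$, which Ass.~\ref{assum:piecewise_inputs} already supplies; this is the "unconditional" Layer~(i) point. Ass.~\ref{assum:coupling} gives irreducibility, so each $M_m$ has a positive left Perron vector $v_m$ with $\|v_m\|_1=1$.

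\textbf{Step 3 (invoke the proposition).} The true plant satisfies $\dot s_i\le 0$, so $M(s(t),q^*_{m,0}\mid\beta_m)\le M_m$ on each interval. Prop.~\ref{prop:decay_eig_clean} then yields~\eqref{eq:cl_decay_finiteT}, with $\bar v_T,\underline v_T$ computed from these realized Perron vectors. Note that $\underline v_T>0$, since the minimum is taken over finitely many positive vectors.

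The main obstacle is conceptual rather than technical. The proposition presupposes a given sequence of inputs, whereas in the closed loop the inputs are generated by repeated optimization. The induction in Step~1 is therefore essential, and it is the only place where the full nominal Assumption~\ref{assum:forecast_consistency} is needed. One should also take care that the proposition's proof requires nothing beyond the sampling-instant bound and inter-sample monotonicity. In particular, the left-limit concatenation across sampling instants uses continuity of $y$, which holds because the controls only switch and the state does not jump.
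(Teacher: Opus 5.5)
Your proposal is correct and follows essentially the same route as the paper: Theorem~\ref{thm:rec_feasibility} is applied inductively to get feasibility at every $t_m$, the $j=0$ spectral constraint (with $\hat s_0 = s(t_m)$ and $\hat\beta_{m,0}=\beta(t_m)$) becomes the realized bound $\lambda_{\max}(M_m)\le -\alpha$, and Prop.~\ref{prop:decay_eig_clean} is then invoked along the closed-loop trajectory. The only cosmetic difference is in the inter-sample step: the paper states it as a bound on $\lambda_{\max}$ obtained through monotonicity, whereas you use the entrywise bound $M(s(t),q^*_{m,0}\mid\beta_m)\le M_m$ directly, which is exactly what the proposition's Lyapunov argument uses.
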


\begin{proof}
By Theorem~\ref{thm:rec_feasibility}, feasibility at $t_0$ implies
Problem~\eqref{eq:MPC-problem} is feasible at every sampling time
$t_m$, so the applied control satisfies
$\lambda_{\max}(M(s(t_m), q^*_{m,0} \mid \beta(t_m))) \leq -\alpha$.
Since $s(t) \leq s(t_m)$ and $q,\beta$ are constant on
$[t_m, t_{m+1})$, monotonicity of $M$ in $s$ and of $\lambda_{\max}$
for Metzler matrices~\cite{berman1994nonnegative} gives
$\lambda_{\max}(M(s(t), q(t) \mid \beta(t))) \leq -\alpha$ for all
$t \geq t_0$. Prop.~\ref{prop:decay_eig_clean} applied along the
closed-loop trajectory yields~\eqref{eq:cl_decay_finiteT}.
\end{proof}

\noindent\textbf{Global Exponential Decay Via Feasible Continuation.}
Theorem~\ref{thm:stability_finiteT} guarantees exponential decay on any finite interval, but $C_T$ may grow with $T$ (Remark~\ref{rem:CT_structure}). 
We complement this with a global bound by exhibiting a specific admissible continuation, distinct from the receding-horizon law, that decays globally at rate $\alpha$.

\begin{theorem}[Global exponential decay]
\label{thm:global_exp_decay}
Suppose Ass.~\ref{assum:piecewise_inputs}--\ref{assum:forecast_consistency} hold and Problem~\eqref{eq:MPC-problem} is feasible at $t_0$. Then there exists an admissible control signal and a constant $C > 0$, independent of time, such that 
\begin{equation}\label{eq:asymptotic_decay}
  \|y(t)\|_1 \leq C\, e^{-\alpha(t - t_0)}\, \|y(t_0)\|_1, \qquad \forall\, t \geq t_0.
\end{equation}
\end{theorem}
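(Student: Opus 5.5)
The plan is to build the continuation from the MPC solution computed at $t_0$, switching to the constant terminal policy once the predicted trajectory reaches the terminal set. Let $\{q^*_{0,j}\}_{j=0}^{H-1}$ be a feasible (e.g.\ optimal) solution of Problem~\eqref{eq:MPC-problem} at $t_0$. Define the continuation $q(t) = q^*_{0,j}$ on $[t_{0,j}, t_{0,j+1})$ for $j=0,\ldots,H-1$, and $q(t) = q_{\text{safe}}$ for all $t \ge t_0 + H\Delta t$. This control is admissible: it satisfies the box constraints by construction.

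\textbf{Spectral constraint at every step.} On the first $H$ intervals, Ass.~\ref{assum:forecast_consistency} makes the realized states equal the predicted ones, so the constraint~\eqref{eq:MPC-eig-constraint} holds at each $t_{0,j}$. From $t_{0,H}$ on, $x(t_{0,H}) = \hat{x}_H \in \mathcal{X}_f$. Lemma~\ref{lem:terminal_invariance} applied inductively, with $\beta(t)\le\beta_{\max}$, keeps $x(t_{0,H+k}) \in \mathcal{X}_f$ for all $k \ge 0$. Monotonicity of $M$ in $\beta$ then gives $\lambda_{\max}(M(s(t_{0,H+k}), q_{\text{safe}} \mid \beta_{m})) \le -\alpha$. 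So the constraint holds at every sampling instant, and by inter-sample monotonicity it holds for all $t\ge t_0$. Thus the continuation is admissible for~\eqref{eq:OCP-infinite}.

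\textbf{Decay on the two phases.} On the finite window $[t_0, t_{0,H}]$, Prop.~\ref{prop:decay_eig_clean} gives $\|y(t)\|_1 \le C_H e^{-\alpha(t-t_0)}\|y(t_0)\|_1$, where $C_H$ involves only the $H$ Perron vectors $v_0,\dots,v_{H-1}$. The tail needs a constant independent of $t$, and I expect this to be the main obstacle. The idea of Remark~\ref{rem:CT_structure} is to use a \emph{single} Lyapunov vector: take $v_f>0$ as the normalized left Perron vector of $M_f \coloneqq M(s(t_{0,H}), q_{\text{safe}} \mid \beta_{\max})$, which is irreducible Metzler by Ass.~\ref{assum:coupling}. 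For every $t \ge t_{0,H}$ we have $s(t)\le s(t_{0,H})$ and $\beta(t)\le \beta_{\max}$, so $M(s(t), q_{\text{safe}}\mid\beta(t)) \le M_f$ entrywise. Since $v_f$ and $y$ are nonnegative, $\frac{d}{dt} v_f^\top y \le v_f^\top M_f y \le -\alpha v_f^\top y$. Grönwall then gives $v_f^\top y(t) \le e^{-\alpha(t-t_{0,H})} v_f^\top y(t_{0,H})$ for all $t\ge t_{0,H}$, with no switching of eigenvectors.

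The subtle point is that this comparison requires $M$ to dominate the realized matrix for all later times. That holds because $s$ is nonincreasing along the true plant, $\dot s_i\le 0$, together with the monotonicity of $M$ in $s$ and $\beta$. Converting norms gives $\|y(t)\|_1 \le (\bar v_f/\underline v_f) e^{-\alpha(t-t_{0,H})}\|y(t_{0,H})\|_1$. Chaining with the first phase yields~\eqref{eq:asymptotic_decay} with $C \coloneqq C_H\,\bar v_f/\underline v_f$, which is independent of $t$.
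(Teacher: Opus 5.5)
Your proposal is correct and follows the paper's proof essentially step for step. It uses the same continuation (the $t_0$ MPC sequence followed by $q_{\mathrm{safe}}$ from $t^*=t_0+H\Delta t$), the same finite-window bound from Prop.~\ref{prop:decay_eig_clean}, and the same single Perron vector of $M(s(t^*),q_{\mathrm{safe}}\mid\beta_{\max})$, which dominates all later realized matrices, giving $C=C_H C_\infty$. Your inductive use of Lemma~\ref{lem:terminal_invariance} to confirm admissibility of the tail for all $t\ge t_0$ is a small addition that the paper leaves implicit; it also follows directly from $M(s(t),q_{\mathrm{safe}}\mid\beta(t))\le M^*$.
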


\begin{proof}
Feasibility at $t_0$ yields a sequence $\{q^*_j\}_{j=0}^{H-1}$ with predicted terminal state $\hat{x}_H^* \in \mathcal{X}_f$. Define the continuation policy: apply $q^*_j$ on $[t_0 + j\Delta t,\, t_0 + (j{+}1)\Delta t)$ for $j = 0, \ldots, H{-}1$, then $q_{\mathrm{safe}}$ for $t \geq t^* \coloneqq t_0 + H\Delta t$.

\emph{Phase 1} ($t \in [t_0, t^*]$): Under Ass.~\ref{assum:forecast_consistency}, the realized trajectory matches the prediction. By Prop.~\ref{prop:decay_eig_clean} with fixed horizon $H$, $\|y(t)\|_1 \leq C_H\, e^{-\alpha(t - t_0)}\, \|y(t_0)\|_1$ on $[t_0, t^*]$, where $C_H = (\bar{v}_H/\underline{v}_H)^{H+1}$ is finite. Crucially, at the transition time $t^*$, the state continuity and exact prediction matching ensure the realized state $x(t^*)$ equals the predicted terminal state $\hat{x}_H^*$. Because feasibility at $t_0$ guarantees $\hat{x}_H^* \in \mathcal{X}_f$, transitioning to $q_{\mathrm{safe}}$ is strictly feasible and stabilizing.

\emph{Phase 2} ($t \geq t^*$): Define $M^* \coloneqq M(s(t^*), q_{\mathrm{safe}} \mid \beta_{\max})$, which is irreducible (Ass.~\ref{assum:coupling}). Let $v^* > 0$ be its normalized Perron left eigenvector. Monotonicity of $M$ gives $M(s(t), q_{\mathrm{safe}} \mid \beta(t)) \leq M^*$ for all $t \geq t^*$. The Lyapunov function $V^*(t) = (v^*)^\top y(t)$ satisfies $\dot{V}^* \leq -\alpha V^*$, yielding $\|y(t)\|_1 \leq C_\infty e^{-\alpha(t-t^*)} \|y(t^*)\|_1$ with $C_\infty = \max_i v^*_i/\min_i v^*_i$. Combining phases gives $C \coloneqq C_H C_\infty$ in~\eqref{eq:asymptotic_decay}.
\end{proof}

\subsection{Robustness to Forecast Error}
\label{subsec:robustness}

The recursive-feasibility and stability results of
Sec.~\ref{subsec:recursive_feasibility}--\ref{subsec:stability} are nominal MPC guarantees:
Ass.~\ref{assum:forecast_consistency} ensures that the realized state at
the next sampling time matches the first predicted state, so that the
shifted-sequence argument applies. We give a robust counterpart in
which the nominal spectral and terminal constraints are replaced by
conservative upper-envelope constraints.

\noindent\textbf{Robust MPC formulation.}
At iteration $m$, let
$\bar s_{m,j}\in\mathbb R^n_{>0}$ for $j=0,\ldots,H$, and
$\bar\beta_{m,j}\in\mathbb R^2_{>0}$ for $j=0,\ldots,H-1$, denote
component-wise upper envelopes for the susceptible state and
transmission parameters over the horizon:
\begin{align}
\label{eq:env_validity_s}
  s(t_{m,j}) &\le \bar s_{m,j}, & j&=0,\ldots,H,\\
\label{eq:env_validity_beta}
  \beta(t_{m,j}) &\le \bar\beta_{m,j}, & j&=0,\ldots,H-1,
\end{align}
component-wise, with the standing global bound
$\bar\beta_{m,j}\le \beta_{\max}$ from Sec.~\ref{subsec:network_dynamics}.
The robust MPC replaces the nominal spectral
constraint~\eqref{eq:MPC-eig-constraint} by
\begin{equation}
\label{eq:robust_spectral_constraint}
  \lambda_{\max}\!\bigl(M(\bar s_{m,j},q_{m,j}\mid \bar\beta_{m,j})\bigr)
  \le -\alpha,
  \quad j=0,\ldots,H-1,
\end{equation}
and the nominal terminal constraint $\hat{x}_H \;\in\; \mathcal{X}_f$ in~\eqref{eq:MPC-problem} by
\begin{equation}
\label{eq:robust_terminal_constraint}
  \lambda_{\max}\!\bigl(M(\bar s_{m,H},q_{\mathrm{safe}}\mid \beta_{\max})\bigr)
  \le -\alpha.
\end{equation}
The cost and prediction dynamics~\eqref{eq:MPC-dynamics} are
unchanged. Because $M$ is entry-wise non-decreasing in $(s,\beta)$ and
$\lambda_{\max}$ is monotone on Metzler
matrices~\cite{berman1994nonnegative}, any
$q_{m,j}$ feasible
for~\eqref{eq:robust_spectral_constraint}--\eqref{eq:robust_terminal_constraint}
automatically satisfies $\lambda_{\max}(M(s(t_{m,j}),q_{m,j}\mid \beta(t_{m,j})))\le -\alpha$
under \eqref{eq:env_validity_s}--\eqref{eq:env_validity_beta}.

To establish the robust results, we propose the following assumption which is a strict relaxation of the nominal Ass.~\ref{assum:forecast_consistency}.
\begin{assump}[Shift-consistent envelopes]
\label{ass:shift_consistent}
The upper envelopes satisfy, for every $m\ge 0$,
\begin{align}
  \bar s_{m+1,j} &\le \bar s_{m,j+1},
    & j&=0,\ldots,H-1,
    \label{eq:shift_s}\\
  \bar\beta_{m+1,j} &\le \bar\beta_{m,j+1},
    & j&=0,\ldots,H-2,
    \label{eq:shift_beta}\\
  \bar s_{m+1,H} &\le \bar s_{m,H},
    \label{eq:shift_terminal}
\end{align}
component-wise, together with the standing global bound
$\bar\beta_{m,j}\le \beta_{\max}$.
\end{assump}

\begin{theorem}[Robust recursive feasibility]
\label{thm:robust_recursive_feasibility}
Let Ass.~\ref{assum:piecewise_inputs},~\ref{assum:coupling}, and~\ref{assum:discretization}
hold, and let the envelopes satisfy Ass.~\ref{ass:shift_consistent}.
If the robust MPC with
constraints~\eqref{eq:robust_spectral_constraint}--\eqref{eq:robust_terminal_constraint}
is feasible at $t_m$, it remains feasible at $t_{m+1}$.
\end{theorem}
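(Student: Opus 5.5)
We follow the shifted-sequence argument of Theorem~\ref{thm:rec_feasibility}, replacing exact prediction matching by the envelope ordering of Ass.~\ref{ass:shift_consistent}. Let $\{q^*_{m,j}\}_{j=0}^{H-1}$ be feasible for the robust problem at $t_m$. At $t_{m+1}$ we take the candidate
\[
  \tilde q_{m+1,j} \coloneqq q^*_{m,j+1}\ (j=0,\ldots,H-2), \qquad \tilde q_{m+1,H-1} \coloneqq q_{\mathrm{safe}} .
\]
The box constraints hold trivially. The robust constraints~\eqref{eq:robust_spectral_constraint}--\eqref{eq:robust_terminal_constraint} involve only the envelopes $\bar s,\bar\beta$ and the decision variables, not the predicted trajectory $\hat x_j$. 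This is the key simplification: no matching between prediction and plant is needed, and the (unchanged) dynamics constraint~\eqref{eq:MPC-dynamics} is satisfied by definition, since it simply generates the candidate predicted trajectory from $x(t_{m+1})$.

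\textbf{Stages $j=0,\ldots,H-2$.} By~\eqref{eq:shift_s}--\eqref{eq:shift_beta}, $\bar s_{m+1,j}\le \bar s_{m,j+1}$ and $\bar\beta_{m+1,j}\le\bar\beta_{m,j+1}$. Since $M$ is entry-wise non-decreasing in $(s,\beta)$ (Sec.~\ref{subsec:spectral_decay}),
\[
  M(\bar s_{m+1,j},q^*_{m,j+1}\mid\bar\beta_{m+1,j}) \le M(\bar s_{m,j+1},q^*_{m,j+1}\mid\bar\beta_{m,j+1}).
\]
Both matrices are Metzler, and the larger one is irreducible by Ass.~\ref{assum:coupling} with $\bar s>0$. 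Monotonicity of $\lambda_{\max}$~\cite{berman1994nonnegative} then yields a value $\le-\alpha$, using feasibility at step $j+1$ at time $t_m$.

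\textbf{Stage $j=H-1$.} Here we need $\lambda_{\max}(M(\bar s_{m+1,H-1},q_{\mathrm{safe}}\mid\bar\beta_{m+1,H-1}))\le-\alpha$. First use $\bar\beta_{m+1,H-1}\le\beta_{\max}$ and~\eqref{eq:shift_s} with $j=H-1$, which gives $\bar s_{m+1,H-1}\le\bar s_{m,H}$. Monotonicity then bounds the quantity by $\lambda_{\max}(M(\bar s_{m,H},q_{\mathrm{safe}}\mid\beta_{\max}))$, which is $\le-\alpha$ by the terminal constraint~\eqref{eq:robust_terminal_constraint} at $t_m$.

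\textbf{Terminal constraint.} By~\eqref{eq:shift_terminal}, $\bar s_{m+1,H}\le\bar s_{m,H}$, so monotonicity gives $\lambda_{\max}(M(\bar s_{m+1,H},q_{\mathrm{safe}}\mid\beta_{\max}))\le\lambda_{\max}(M(\bar s_{m,H},q_{\mathrm{safe}}\mid\beta_{\max}))\le-\alpha$. This step plays the role of Lemma~\ref{lem:terminal_invariance}: Ass.~\ref{assum:discretization} is not needed for the inequality itself. It explains why~\eqref{eq:shift_terminal} is a reasonable requirement, since the physical depletion $s(t_{m+1,H})\le s(t_{m,H})$ makes such envelopes natural. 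We cite Ass.~\ref{assum:discretization} only for consistency of the candidate predicted trajectory.

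\textbf{Main obstacle.} The delicate point is recognizing that the terminal-stage argument must go through the envelope at index $H$ of iteration $m$, rather than through invariance of $\mathcal{X}_f$ along a predicted state. We also need to confirm that the monotonicity of $\lambda_{\max}$ applies. Entry-wise ordering of Metzler matrices suffices for this (shift by a multiple of $I$ to make both nonnegative and use Perron--Frobenius), so irreducibility is not essential. Everything else reduces to the chain of component-wise inequalities above.
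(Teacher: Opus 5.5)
Your proof is correct and follows the paper's argument essentially verbatim. You use the same shifted candidate with $q_{\mathrm{safe}}$ appended and the same three-part check: stages $j\le H-2$ via \eqref{eq:shift_s}--\eqref{eq:shift_beta}, stage $H-1$ via \eqref{eq:shift_s} at $j=H-1$ together with $\bar\beta\le\beta_{\max}$ and the terminal constraint at $t_m$, and the terminal stage via \eqref{eq:shift_terminal}, each closed by entry-wise monotonicity of $M$ and of $\lambda_{\max}$ on Metzler matrices. Your side remarks also match the paper, whose proof likewise never uses irreducibility or Ass.~\ref{assum:discretization} in the inequality chain.
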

\begin{proof}
Let $\{q^*_{m,j}\}_{j=0}^{H-1}$ be feasible at $t_m$. Define the
shifted candidate at $t_{m+1}$:
\[
  \tilde q_{m+1,j}=
  \begin{cases} q^*_{m,j+1}, & j=0,\ldots,H-2,\\
                q_{\mathrm{safe}}, & j=H-1.\end{cases}
\]
Box constraints are immediate.

\emph{Stages $j=0,\ldots,H-2$.} By \eqref{eq:shift_s}, \eqref{eq:shift_beta}, and
entry-wise monotonicity of $M$ in $(s,\beta)$,
\[
  M(\bar s_{m+1,j},\tilde q_{m+1,j}\mid \bar\beta_{m+1,j})
  \le M(\bar s_{m,j+1},q^*_{m,j+1}\mid \bar\beta_{m,j+1})
\]
entry-wise; both matrices are Metzler, so by \eqref{eq:robust_spectral_constraint} at step $j+1$ of iteration $m$ and the monotonicity of $\lambda_{\max}$
\[
  \lambda_{\max}\!\bigl(M(\bar s_{m+1,j},\tilde q_{m+1,j}\mid \bar\beta_{m+1,j})\bigr)
  \le -\alpha.
\]

\emph{Stage $j=H-1$.} Here $\tilde q_{m+1,H-1}=q_{\mathrm{safe}}$. By
\eqref{eq:shift_s} at $j=H-1$ and the standing bound
$\bar\beta_{m+1,H-1}\le \beta_{\max}$,
\[
  M(\bar s_{m+1,H-1},q_{\mathrm{safe}}\mid \bar\beta_{m+1,H-1})
  \le M(\bar s_{m,H},q_{\mathrm{safe}}\mid \beta_{\max})
\]
entry-wise, so
\[
  \lambda_{\max}\!\bigl(M(\bar s_{m+1,H-1},q_{\mathrm{safe}}\mid \bar\beta_{m+1,H-1})\bigr)\le -\alpha,
\]
by~\eqref{eq:robust_terminal_constraint} at $t_m$.

\emph{Terminal stage.} By \eqref{eq:shift_terminal} and monotonicity,
\begin{align*}
  &\lambda_{\max}\!\bigl(M(\bar s_{m+1,H},q_{\mathrm{safe}}\mid \beta_{\max})\bigr)
  \le  \\ 
  &\lambda_{\max}\!\bigl(M(\bar s_{m,H},q_{\mathrm{safe}}\mid \beta_{\max})\bigr)\le -\alpha,
\end{align*}
so~\eqref{eq:robust_terminal_constraint} holds at $t_{m+1}$. The shifted
candidate is feasible.
\end{proof}

\noindent\textbf{Realized closed-loop decay.}

\begin{corollary}[Robust realized decay]
\label{cor:robust_realized_decay}
Under the hypotheses of Theorem~\ref{thm:robust_recursive_feasibility},
suppose the robust MPC is feasible at $t_0$. Then for every $T>t_0$,
the realized closed-loop infected state satisfies
\begin{equation}
\label{eq:realized_decay}
  \|y(t)\|_1 \le C_T\, e^{-\alpha(t-t_0)} \|y(t_0)\|_1,
  \qquad t\in[t_0,T],
\end{equation}
with $C_T = (\bar{v}_T/\underline{v}_T)^{M_{\mathrm{tot}}+1}$ as in
Prop.~\ref{prop:decay_eig_clean}, where $v_m$ is the normalized left Perron eigenvector of the realized matrix
$M(s(t_m), q^*_{m,0}\mid\beta(t_m))$.
\end{corollary}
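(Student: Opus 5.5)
The plan is to mirror the proof of Theorem~\ref{thm:stability_finiteT}. The only change is that exact prediction (Ass.~\ref{assum:forecast_consistency}) is replaced by envelope validity~\eqref{eq:env_validity_s}--\eqref{eq:env_validity_beta} together with monotonicity of $M$.

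\textbf{Step 1: feasibility at every sampling time.} Feasibility of the robust MPC at $t_0$, combined with Theorem~\ref{thm:robust_recursive_feasibility} and induction on $m$, gives feasibility at every $t_m$ with $m\le M_{\mathrm{tot}}$. Hence at each such $t_m$ a first-stage control $q^*_{m,0}\in[0,B]^{2n}$ exists and satisfies~\eqref{eq:robust_spectral_constraint} at $j=0$:
\[
\lambda_{\max}\bigl(M(\bar s_{m,0},q^*_{m,0}\mid\bar\beta_{m,0})\bigr)\le-\alpha.
\]

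\textbf{Step 2: transfer the constraint to the realized matrix.} Envelope validity at $j=0$ gives $s(t_m)\le\bar s_{m,0}$ and $\beta(t_m)\le\bar\beta_{m,0}$. Since $M$ is entry-wise non-decreasing in $(s,\beta)$, we get
\[
M(s(t_m),q^*_{m,0}\mid\beta(t_m))\le M(\bar s_{m,0},q^*_{m,0}\mid\bar\beta_{m,0})
\]
entry-wise. Both matrices are Metzler, so monotonicity of $\lambda_{\max}$~\cite{berman1994nonnegative} yields $\lambda_{\max}(M_m)\le-\alpha$ for the realized matrix $M_m\coloneqq M(s(t_m),q^*_{m,0}\mid\beta(t_m))$.

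This step is where the robust formulation does its work. No exact-prediction hypothesis is needed, because only the $j=0$ envelope is used, and that envelope is the one anchored to measured quantities. One point needs care: Ass.~\ref{assum:piecewise_inputs} says $\beta$ is constant on $[t_m,t_{m+1})$ and equal to $\beta_m=\beta(t_m)$. So the envelope evaluated at the sampling instant bounds the whole interval, and the realized matrix is indeed $M_m$.

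\textbf{Step 3: apply Prop.~\ref{prop:decay_eig_clean} to the realized trajectory.} Its hypotheses now hold along the true closed loop:
\begin{itemize}
\item Ass.~\ref{assum:piecewise_inputs}: piecewise-constant inputs and transmission.
\item Ass.~\ref{assum:coupling}: irreducibility and $s>0$, so each $M_m$ has a positive normalized left Perron vector $v_m$.
\item $\lambda_{\max}(M_m)\le-\alpha$ for $m=0,\dots,M_{\mathrm{tot}}$, from Step 2.
\end{itemize}

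The inter-sample monotonicity $M(s(t),q^*_{m,0}\mid\beta_m)\le M_m$ on $[t_m,t_{m+1})$ comes from the true plant's depletion $\dot s\le0$. It does not depend on the prediction model $\Psi$ or on Ass.~\ref{assum:discretization}, which is used only inside Theorem~\ref{thm:robust_recursive_feasibility}.

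The proposition then gives~\eqref{eq:per_interval} on each interval and, chaining across intervals, the bound~\eqref{eq:realized_decay} with $C_T=(\bar v_T/\underline v_T)^{M_{\mathrm{tot}}+1}$, where $\bar v_T,\underline v_T$ are taken over the realized $v_m$.

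I expect no substantive obstacle. The only subtlety is the bookkeeping in Step 2, namely that the constraint is imposed on envelopes while the decay estimate needs the realized matrix.
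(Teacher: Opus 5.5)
Your proposal is correct and follows the paper's proof essentially step for step. You use recursive feasibility at every $t_m$ via Theorem~\ref{thm:robust_recursive_feasibility}, then transfer the $j=0$ robust spectral constraint to the realized matrix $M(s(t_m),q^*_{m,0}\mid\beta(t_m))$ through envelope validity and monotonicity of $M$ and $\lambda_{\max}$ on Metzler matrices, and finally apply Prop.~\ref{prop:decay_eig_clean} along the realized trajectory, using the plant's depletion $\dot s\le 0$ for the inter-sample bound.
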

\begin{proof}
At every $t_m\in[t_0,T]$, Theorem~\ref{thm:robust_recursive_feasibility}
gives a feasible $q^*_{m,0}$ satisfying the robust spectral constraint
at $j=0$. Envelope
validity~\eqref{eq:env_validity_s}--\eqref{eq:env_validity_beta} and
monotonicity of $M$ and $\lambda_{\max}$ on Metzler matrices then yield $\lambda_{\max}(M(s(t_m),q^*_{m,0}\mid \beta(t_m)))\le -\alpha$. The inter-sample argument of Theorem~\ref{thm:stability_finiteT} with piecewise-constant $(q,\beta)$ and Prop.~\ref{prop:decay_eig_clean} gives~\eqref{eq:realized_decay}.
\end{proof}

\begin{remark}[On global decay under forecast error]
\label{rem:robust_global}
Unlike Theorem~\ref{thm:global_exp_decay}, the robust formulation does not
directly yield a time-uniform global decay bound. The obstruction is that
Theorem~\ref{thm:global_exp_decay} uses Ass.~\ref{assum:forecast_consistency}
to identify the realized state at $t^* = t_0 + H\Delta t$ with the
predicted terminal state $\hat x_H^* \in \mathcal X_f$, so that Phase~2's Lyapunov bound can be invoked from $t^*$. Under envelope dominance alone, the realized $x(t^*)$ need not lie in $\mathcal X_f$. Two structural properties of the closed loop nonetheless point toward a global guarantee. First, $\mathcal X_f$ is downward-closed in $s$: if $x \in \mathcal X_f$ and $s' \le s(x)$ component-wise, then any state with susceptible component $s'$ also lies in $\mathcal X_f$.
Second, susceptibles are monotonically non-increasing along any realized trajectory. Together these imply that once the realized
$s(t)$ enters $\mathcal X_f$, it remains in $\mathcal X_f$, and Phase~2 of
Theorem~\ref{thm:global_exp_decay} extends from that point. A precise finite-time entry condition, however, depends on initial conditions and network parameters, and we leave its characterization to future work. The robust guarantee in this paper is therefore stated on arbitrary finite horizons.
\end{remark}
\noindent\textbf{Nominal reduction.}
In the limit of exact prediction and tight envelopes,
$\bar s_{m,j}=\hat s_{m,j}=s(t_{m,j})$ and
$\bar\beta_{m,j}=\hat\beta_{m,j}=\beta(t_{m,j})$, the robust
constraints~\eqref{eq:robust_spectral_constraint}--\eqref{eq:robust_terminal_constraint}
reduce to the nominal
constraints~\eqref{eq:MPC-eig-constraint}--\eqref{eq:MPC-terminal},
Ass.~\ref{ass:shift_consistent} holds with equality under
Ass.~\ref{assum:forecast_consistency}, and
Theorem~\ref{thm:robust_recursive_feasibility} and
Corollary~\ref{cor:robust_realized_decay} recover
Theorem~\ref{thm:rec_feasibility} and Theorem~\ref{thm:stability_finiteT}.

\section{NUMERICAL VALIDATION}
\label{sec:numerical_results}

\subsection{Simulation Setup}
\noindent\textbf{Data, Calibration \& Parameters.} 
We validate the framework on a network of $n=14$ Massachusetts counties. Topology, populations $N_i$, and mobility $\tau_{ij}$ follow~\cite{talaei2025network}. Disease parameters are $r^{\mathrm{a}} = r^{\mathrm{s}} = 0.2$ and $\epsilon = 0.32$. Baseline transmission ($\beta^{\mathrm{a}} = 0.67\beta^{\mathrm{s}}$) is calibrated to the April 2020 Massachusetts epidemic growth~\cite{birge2022controlling}. The MPC uses a sampling interval $\Delta t = 7$ days, horizon $H=7$ weeks, target decay $\alpha = 0.023\,\text{day}^{-1}$, control weight $\rho = 0.1$, bound $B=2$, and terminal penalty $P = \mathrm{diag}(0_n, I_n, I_n, 0_n)$. To guarantee integration accuracy over the 7-day prediction and control steps, both the MPC internal trajectories and the closed-loop plant simulation are integrated via an adaptive Runge--Kutta solver (\texttt{ode45}, $\mathrm{RelTol}=10^{-8}$) with internal substep selection.

\noindent\textbf{Scenarios.}
We evaluate three settings: \emph{Pure} (no limits); \emph{Smoothing Cost} (penalizes increases in isolation rates with $\rho_{\text{smooth}}=1$); and \emph{Rate Constraints} ($\Delta q_i(t)\le 0.2$/week, modeling administrative inertia). In all cases, a variant emerges at day 28, increasing transmission by 80\%. We compare MPC against a prediction-free myopic baseline that minimizes instantaneous cost subject to $\lambda_{\max}(M)\le -\alpha$. If rate limits render the myopic problem infeasible during the shock, it applies the maximum admissible update, temporarily violating the decay constraint.

\begin{figure*}[t] 
    \centering
    
    \subfloat[Infection prevalence ($\|y\|_1$).\label{fig:infections}]{%
        \includegraphics[width=0.32\textwidth]{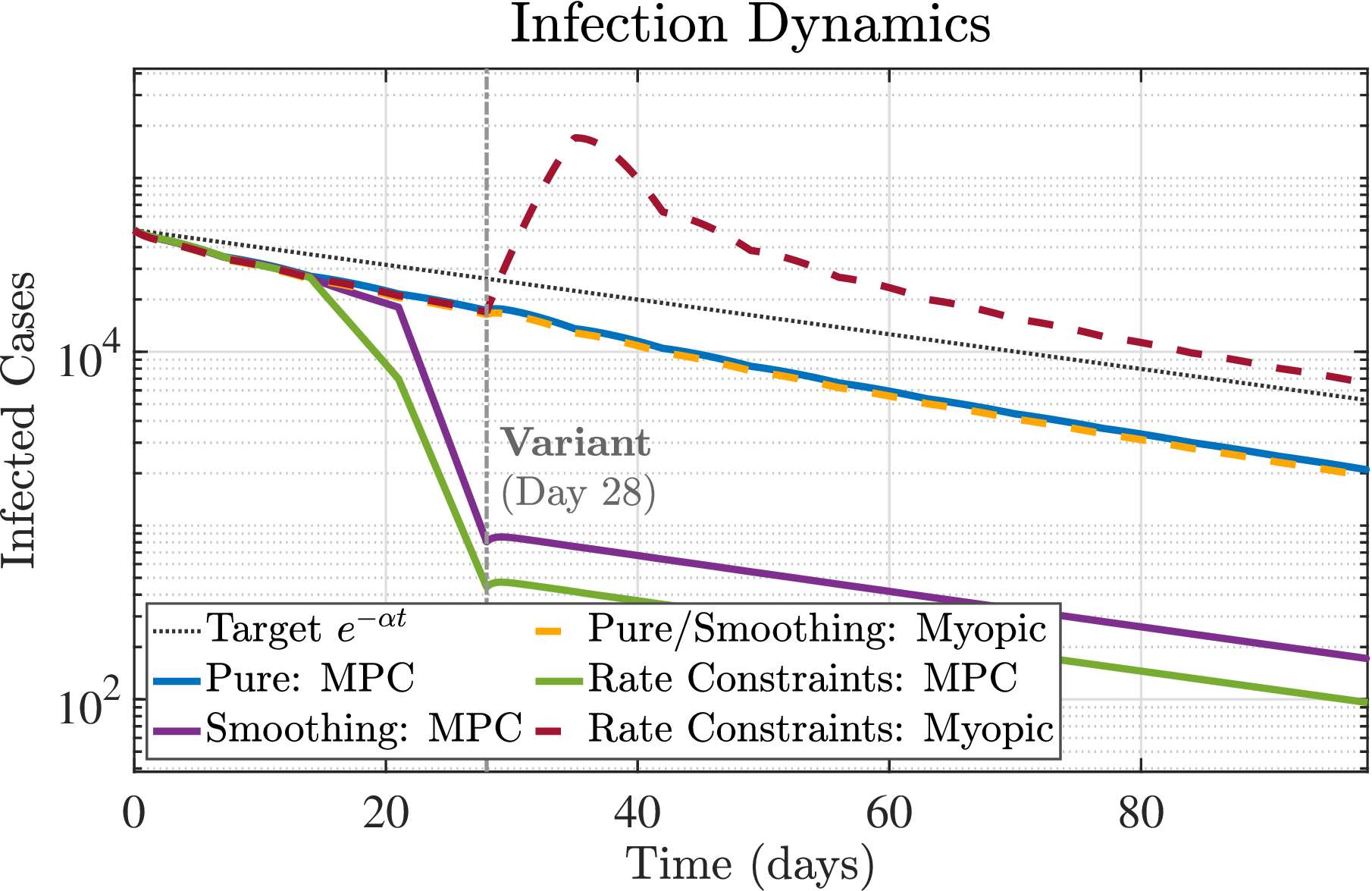}%
    }
    \hfil 
    \subfloat[Average control intensity ($\bar{q}$).\label{fig:controls}]{%
        \includegraphics[width=0.32\textwidth]{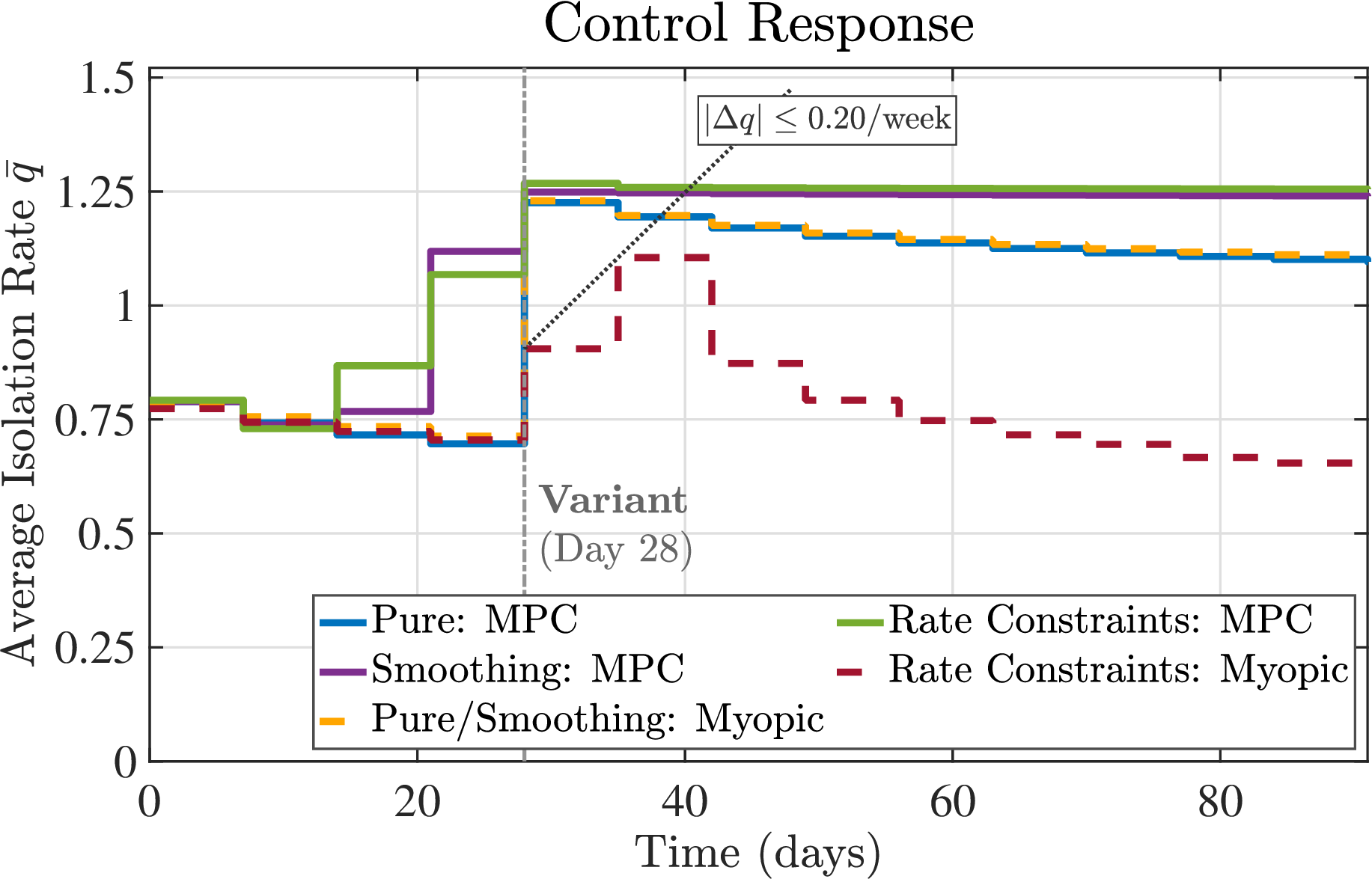}%
    }
    \hfil
    \subfloat[Total isolated population.\label{fig:quarantine}]{%
        \includegraphics[width=0.32\textwidth]{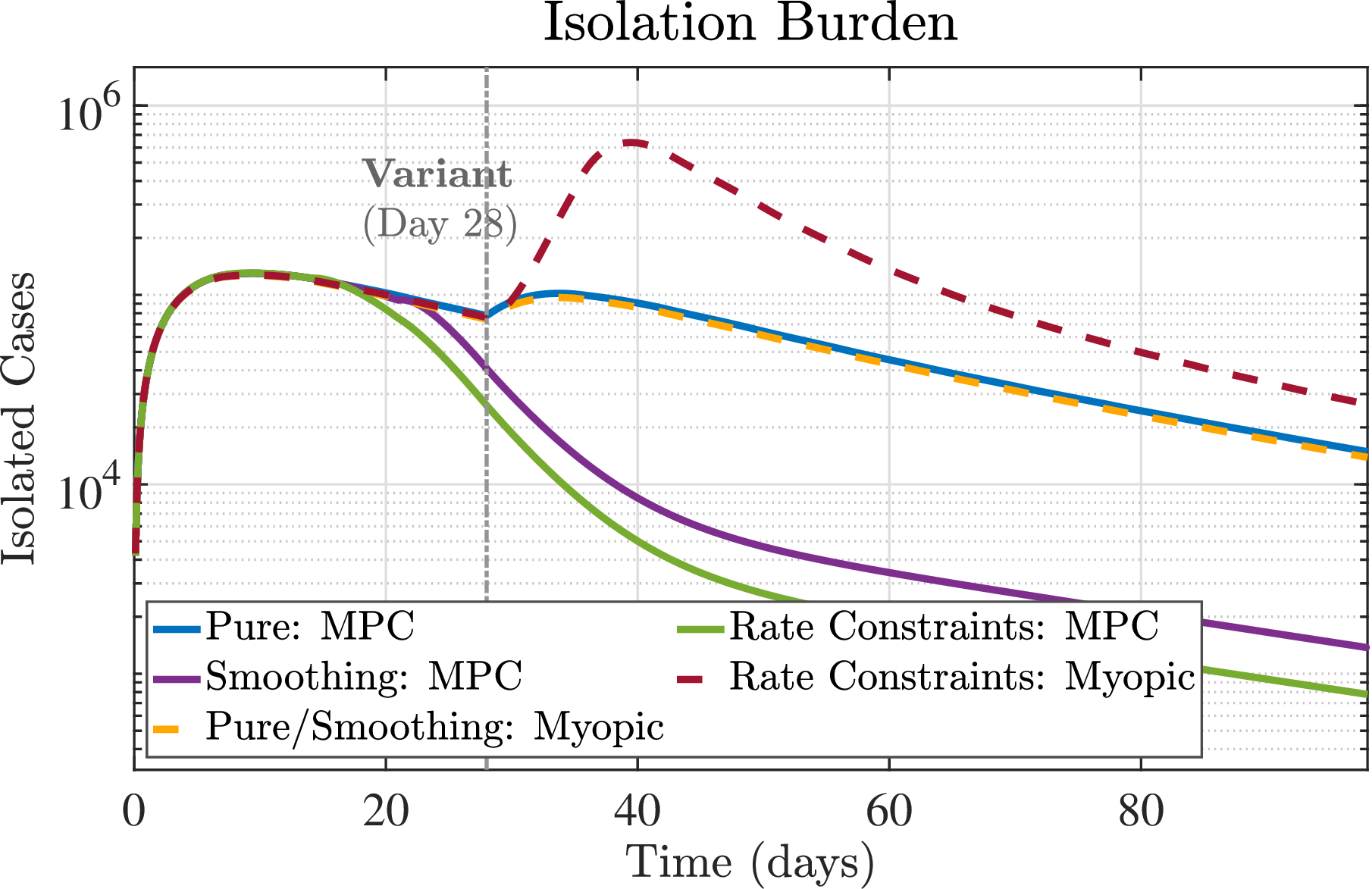}%
    }
    
    \caption{\textbf{Performance Across the Three Settings.} 
    (a) Infection dynamics: MPC maintains exponential decay (dotted line shows target $e^{-\alpha t}$) across all scenarios, while Myopic fails under rate constraints. 
    (b) Control response 
    ($\bar q(t) \coloneqq \tfrac{1}{2n}\sum_{i=1}^{n}\bigl(q^{\mathrm{a}}_i(t)+q^{\mathrm{s}}_i(t)\bigr)$): MPC anticipates the day-28 variant shock and ramps up preemptively, whereas Myopic reacts only at day 28. 
    Under rate constraints, Myopic cannot ramp up fast enough and saturates at maximum allowed rate.
    (c) Isolation burden.}
    \label{fig:simulation_results}
\end{figure*}

\subsection{Computational Approach and Tractability}
\label{subsec:computation}

The non-convex NLP~\eqref{eq:MPC-problem} contains $2nH$ decision variables and is solved in MATLAB using \texttt{fmincon} (interior-point), evaluating the spectral constraint via the largest real eigenvalue of $M(\hat s_j, q_j \mid \hat\beta_{m,j})$ at each step. Because the core contribution is the spectral-certificate formulation itself, any standard NLP tool (e.g., IPOPT, KNITRO via CasADi) can be substituted without affecting the safety guarantees of Theorems~\ref{thm:rec_feasibility}--\ref{thm:global_exp_decay}. At $t_0$, the solver is warm-started with a constant sequence $q_j = \bar q_0\mathbf{1}_{2n}$, where $\bar q_0$ is the smallest control satisfying the spectral constraint at $x_0$ (found via bisection). At subsequent steps, we use the shifted sequence $\{q^*_{m,1},\dots,q^*_{m,H-1},q_{\mathrm{safe}}\}$, which is guaranteed recursively feasible by Theorem~\ref{thm:rec_feasibility}.

\noindent\textbf{Tractability.}
For the $n=14$, $H=7$ baseline (196 variables), \texttt{fmincon} successfully converged to a first-order optimal point at every step across all scenarios, strictly satisfying the realized target $\lambda_{\max}(M)\le -0.023$\,day$^{-1}$ (verifying Layer~1). Median time to reach a solution was $58$\,s (max $129$\,s) on an Apple M2 Pro; the shifted warm-start reduced median iterations from 70 (at $t_0$) to 49. Because the NLP is non-convex, global optimality is not guaranteed; however, recursive feasibility strictly ensures any returned local optimum safely satisfies constraints~\eqref{eq:MPC-eig-constraint}--\eqref{eq:MPC-terminal}. Solution times scale approximately quadratically with the horizon due to $\mathcal{O}(H)$ ODE and eigenvalue evaluations, with median times of $(1.9, 11.7, 30.8, 58.1, 130.1)$\,s for $H\in\{1,3,5,7,10\}$. Given the weekly sampling period ($\Delta t = 7$\,days), these sub-two-minute iterations easily support real-time deployment.

\subsection{Performance Analysis}
Fig.~\ref{fig:simulation_results} summarizes the dynamics across the three scenarios. In the \emph{Pure} and \emph{Smoothing Cost} settings, both controllers easily maintain exponential decay since policies can adjust instantaneously. However, while Myopic ignores the smoothing penalty (optimizing only one step), MPC exploits the horizon to gradually ramp up interventions before the variant emerges, softening the abruptness of lockdown shifts. 

The critical divergence occurs under \emph{Rate Constraints} ($\Delta q_i \le 0.2$/week). Myopic cannot react fast enough to the 80\% transmission surge, losing the decay guarantee and allowing prevalence to peak at roughly $1.7\times 10^5$ cases. Conversely, MPC anticipates the shock and begins tightening control two weeks early. This preemptive accumulation of control authority absorbs the surge while respecting rate limits. Although this slightly increases pre-shock interventions, it strictly preserves exponential decay and reduces the cumulative isolation burden by $>50\%$ relative to Myopic. Thus, prediction becomes strictly essential under realistic implementation delays.

\subsection{Attribution: Hard vs.\ Soft Spectral Enforcement} 
To verify that this preemptive behaviour is specific to the hard spectral certificate rather than a generic consequence of having a prediction horizon, we ran an ablation in which constraint~\eqref{eq:MPC-eig-constraint} is replaced by a quadratic 
penalty $\tfrac{1}{2}\rho_\lambda\sum_{j=0}^{H-1}(\lambda_{\max}(\widehat M_j)+\alpha)_+^2$ in the cost, with the horizon, forecast, prediction model, terminal constraint, and rate limit all held fixed.

\noindent\textbf{Calibration.} A one-step sweep at $t_0$ under unlimited rate changes shows that the soft variant matches the hard solution only as $\rho_\lambda$ grows: realized spectral abscissa 
$\{+0.258,\,+0.020,\,-0.019,\,-0.023\}$\,day$^{-1}$ at $\rho_\lambda\in\{10,10^2,10^3,10^4\}$, against $-0.023$ for the hard variant. This convergence behaviour is consistent with classical exterior-penalty theory. As $\rho_\lambda\to\infty$, the 
penalized optimum approaches the hard-constrained optimum~\cite{bertsekas1997nonlinear}. Two practical limitations accompany this convergence. First, \texttt{fmincon}'s interior-point iteration count at the initial step grew from $131$ at $\rho_\lambda=10^2$ to 
$324$ at $\rho_\lambda=10^3$, and exceeded $500$ iterations at $\rho_\lambda=10^4$. Second, the penalty has a discontinuous second derivative at the constraint boundary; this lack of smoothness compounds with the penalty's growing magnitude as $\rho_\lambda$ increases. Recovering hard-constraint behaviour therefore requires 
$\rho_\lambda$ in a regime where the formulation is no longer a meaningfully soft alternative but a numerically degraded relaxation of the hard constraint.

\noindent\textbf{Closed-loop result under rate limits.} With a representative moderate weight $\rho_\lambda=10^2$, the soft MPC fails to ramp up preemptively, loses the decay guarantee at $13$ of $14$ sampling instants, and reaches a peak realized spectral abscissa of $+0.059$\,day$^{-1}$ at variant onset---qualitatively the same failure mode as the Myopic baseline. Fig.~\ref{fig:soft_spectral} visualizes this directly: panel~(a) shows infection trajectories, panel~(b) the absence of preemptive control ramp-up, and panel~(c) the realized $\lambda_{\max}$ along the closed loop, where the soft variant sits above the safety threshold $-\alpha$ for nearly the entire horizon.

The ablation also clarifies the relationship to state-dependent safety constraints in epidemic MPC. ICU- and hospitalization-cap formulations~\cite{kohler2021robust,carli2020model} become binding only as the relevant compartment approaches its threshold; in our scenario, prevalence at the variant onset is well below typical hospital-cap thresholds, so a cap-based controller would not trigger preemptive action and would react only as infections rise---precisely the failure mode of both the Myopic baseline and the soft-spectral variant. The anticipatory ramp-up is thus a consequence of the time-coupled feasibility of a state-independent spectral certificate enforced as a hard constraint, not a generic property of MPC with prediction or of smoothly penalizing the certificate at moderate weight.

\begin{figure*}[t]
    \centering
    \subfloat[Infection prevalence ($\|y\|_1$).\label{fig:soft_inf}]{%
        \includegraphics[width=0.32\textwidth]{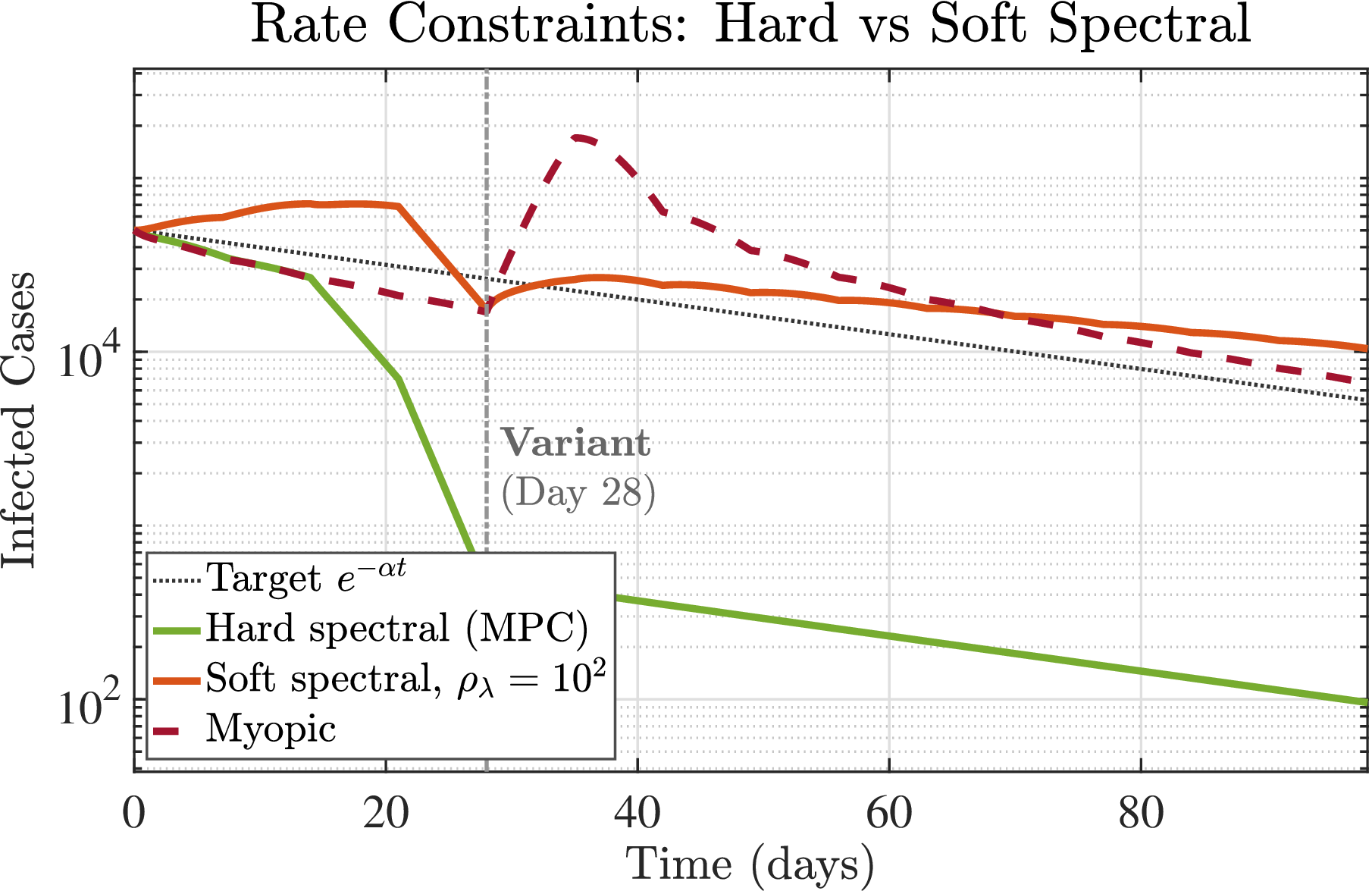}}%
    \hfil
    \subfloat[Average control intensity ($\bar{q}$).\label{fig:soft_q}]{%
        \includegraphics[width=0.32\textwidth]{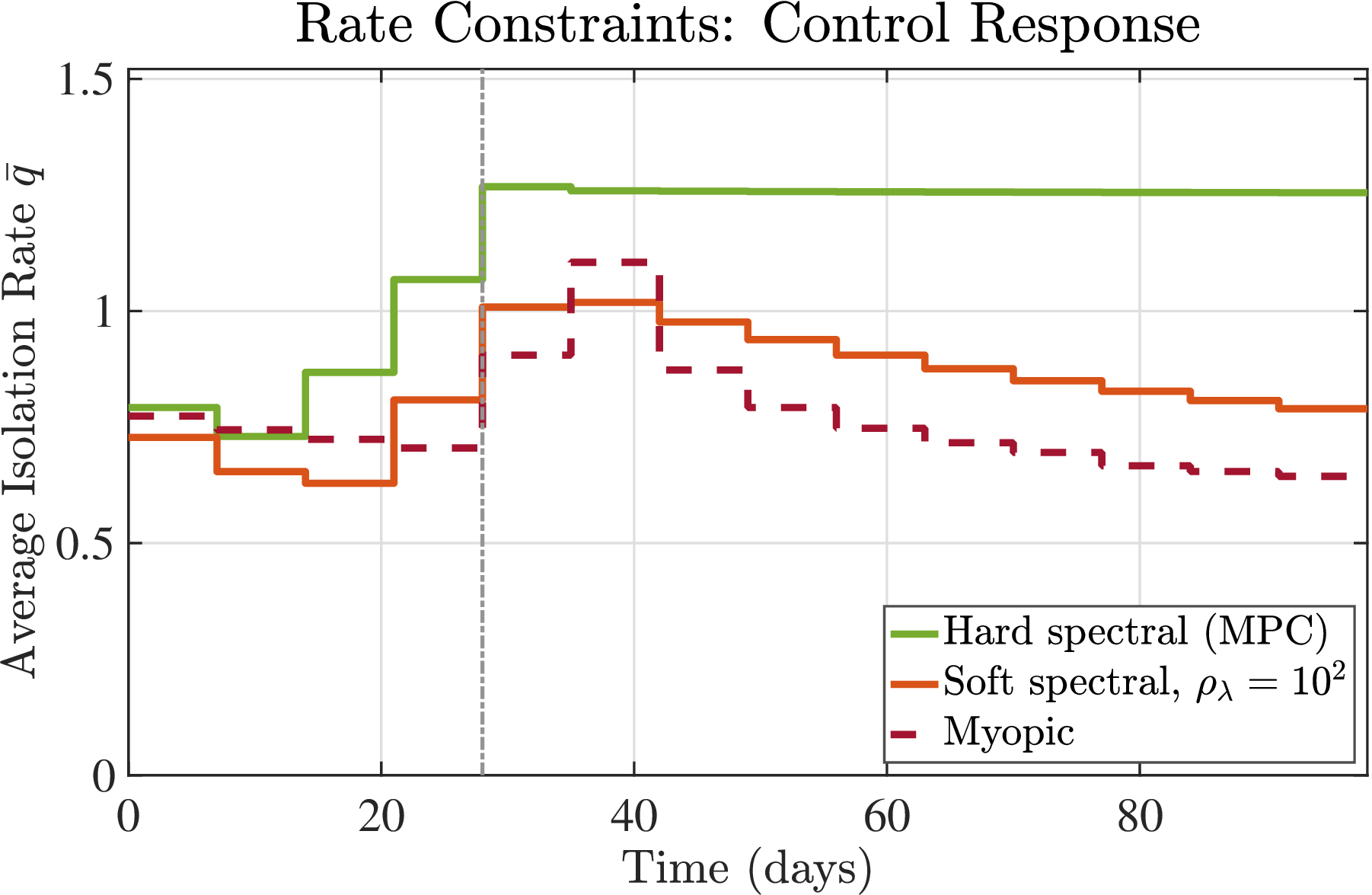}}%
    \hfil
    \subfloat[Realized $\lambda_{\max}(M)$.\label{fig:soft_lambda}]{%
        \includegraphics[width=0.32\textwidth]{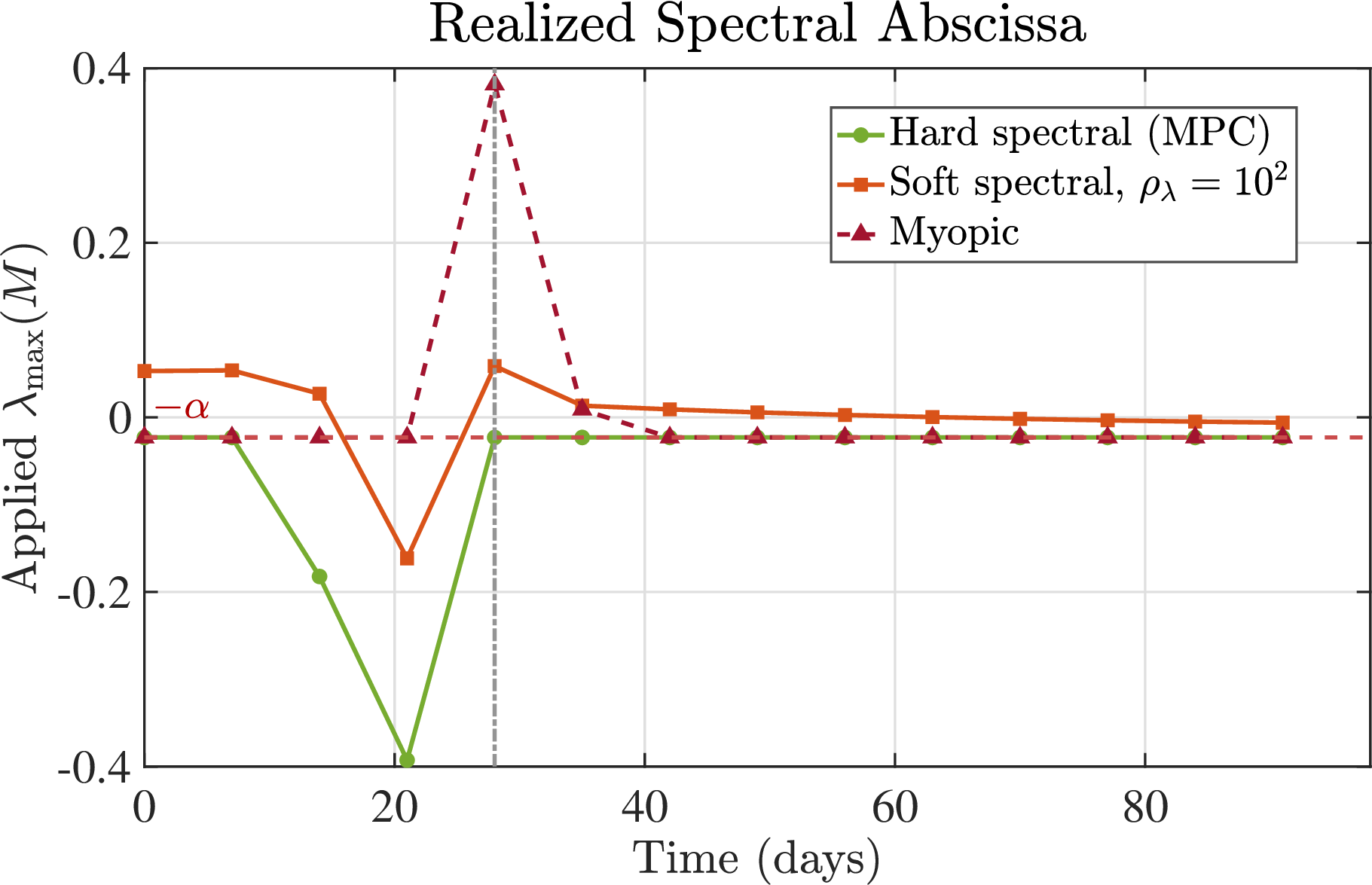}}%
    \caption{\textbf{Soft-spectral ablation under Rate Constraints.} 
    Soft variant ($\rho_\lambda=10^2$) compared to the hard-spectral MPC and the Myopic baseline. (a) Infections: the soft variant drifts above target $e^{-\alpha t}$ and behaves like Myopic. (b) Controls: only the hard variant ramps up preemptively (around day~14); the soft variant ramps up reactively, starting only one week before onset (day~21). (c) Realized spectral abscissa: hard variant sits at the design target $-\alpha$ throughout, while the soft variant and Myopic both violate the safety threshold for most 
    of the horizon.}
    \label{fig:soft_spectral}
\end{figure*}

\section{CONCLUSIONS}
We presented a certificate-based MPC framework for networked epidemic management that embeds a spectral decay condition as a hard stage-wise constraint, with terminal-set invariance following from monotone susceptible depletion. Under nominal prediction we established recursive feasibility, finite-horizon closed-loop decay, and the existence of a feasible continuation achieving global exponential decay; a robust counterpart recovers recursive feasibility and finite-horizon realized decay under a strict relaxation of the exact-prediction hypothesis. On the Massachusetts network, the spectral constraint produced preemptive isolation ramp-up under administrative rate limits, where myopic and state-dependent formulations fail.

\noindent\textbf{Limitations \& future work.} The spectral constraint is a global property of the network and does not admit straightforward decomposition; distributed implementations via dual decomposition or ADMM on the spectral certificate are a natural direction for future work. The non-convexity of the NLP precludes global optimality guarantees, though recursive feasibility ensures safety along any first-order optimal trajectory. Finally, while standard continuous-time safety-critical designs (e.g., control barrier functions~\cite{ames2017control}) are myopic and struggle with rate limits, extending predictive safety-critical tools to handle non-smooth eigenvalue constraints remains a compelling theoretical direction.


\bibliographystyle{IEEEtran}   
\bibliography{refs}            

@article{talaei2025network,
  title={Network-based epidemic control through optimal travel and quarantine management},
  author={Talaei, Mahtab and Rikos, Apostolos I and Olshevsky, Alex and White, Laura F and Paschalidis, Ioannis Ch},
  journal={IEEE Transactions on Control of Network Systems},
  year={2025},
  publisher={IEEE}
}

@article{ma2023optimal,
  title={Optimal fixed lockdown for pandemic control},
  author={Ma, Qianqian and Liu, Yang-Yu and Olshevsky, Alex},
  journal={IEEE Transactions on Automatic Control},
  volume={69},
  number={7},
  pages={4538--4553},
  year={2023},
  publisher={IEEE}
}

@book{berman1994nonnegative,
  title={Nonnegative matrices in the mathematical sciences},
  author={Berman, Abraham and Plemmons, Robert J},
  year={1994},
  publisher={SIAM}
}

@article{1981:Cohen,
  title={Convexity of the dominant eigenvalue of an essentially nonnegative matrix},
  author={Cohen, Joel E},
  journal={Proceedings of the American Mathematical Society},
  volume={81},
  number={4},
  pages={657--658},
  year={1981}
}

@article{carli2020model,
  title={Model predictive control to mitigate the COVID-19 outbreak in a multi-region scenario},
  author={Carli, Raffaele and Cavone, Graziana and Epicoco, Nicola and Scarabaggio, Paolo and Dotoli, Mariagrazia},
  journal={Annual Reviews in Control},
  volume={50},
  pages={373--393},
  year={2020},
  publisher={Elsevier}
}

@article{kohler2021robust,
  title={Robust and optimal predictive control of the {COVID-19} outbreak},
  author={K{\"o}hler, Johannes and Schwenkel, Lukas and Koch, Anne and Berberich, Julian and Pauli, Patricia and Allg{\"o}wer, Frank},
  journal={Annual Reviews in Control},
  volume={51},
  pages={525--539},
  year={2021},
  publisher={Elsevier}
}

@inproceedings{herceg2025scenario,
  title={A Scenario-based Model Predictive Control Scheme for Pandemic Response through Non-pharmaceutical Interventions},
  author={Herceg, Domagoj and Dell'Oro, Marco and Bertollo, Riccardo and Miura, Fuminari and de Klaver, Paul and Breschi, Valentina and Krishnamoorthy, Dinesh and Salazar, Mauro},
  booktitle={2025 IEEE Conference on Control Technology and Applications (CCTA)},
  pages={139--144},
  year={2025},
  organization={IEEE}
}

@article{watkins2019robust,
  title={Robust economic model predictive control of continuous-time epidemic processes},
  author={Watkins, Nicholas J and Nowzari, Cameron and Pappas, George J},
  journal={IEEE Transactions on Automatic Control},
  volume={65},
  number={3},
  pages={1116--1131},
  year={2019},
  publisher={IEEE}
}

@article{nowzari2016analysis,
  title={Analysis and control of epidemics: A survey of spreading processes on complex networks},
  author={Nowzari, Cameron and Preciado, Victor M and Pappas, George J},
  journal={IEEE Control Systems Magazine},
  volume={36},
  number={1},
  pages={26--46},
  year={2016},
  publisher={IEEE}
}

@book{lenhart2007optimal,
  title={Optimal control applied to biological models},
  author={Lenhart, Suzanne and Workman, John T},
  year={2007},
  publisher={Chapman and Hall/CRC}
}

@article{flaxman2020estimating,
  title={Estimating the effects of non-pharmaceutical interventions on {COVID-19} in {Europe}},
  author={Flaxman, Seth and Mishra, Swapnil and Gandy, Axel and Unwin, H Juliette T and Mellan, Thomas A and Coupland, Helen and Whittaker, Charles and Zhu, Harrison and Berah, Tresnia and Eaton, Jeffrey W and others},
  journal={Nature},
  volume={584},
  number={7820},
  pages={257--261},
  year={2020},
  publisher={Nature Publishing Group UK London}
}

@article{hsiang2020effect,
  title={The effect of large-scale anti-contagion policies on the {COVID-19} pandemic},
  author={Hsiang, Solomon and Allen, Daniel and Annan-Phan, S{\'e}bastien and Bell, Kendon and Bolliger, Ian and Chong, Trinetta and Druckenmiller, Hannah and Huang, Luna Yue and Hultgren, Andrew and Krasovich, Emma and others},
  journal={Nature},
  volume={584},
  number={7820},
  pages={262--267},
  year={2020},
  publisher={Nature Publishing Group UK London}
}

@article{thunstrom2020benefits,
  title={The benefits and costs of using social distancing to flatten the curve for {COVID-19}},
  author={Thunstr{\"o}m, Linda and Newbold, Stephen C and Finnoff, David and Ashworth, Madison and Shogren, Jason F},
  journal={Journal of Benefit-Cost Analysis},
  volume={11},
  number={2},
  pages={179--195},
  year={2020},
  publisher={Cambridge University Press}
}

@article{davies2021estimated,
  title={Estimated transmissibility and impact of {SARS-CoV-2} lineage B. 1.1. 7 in {England}},
  author={Davies, Nicholas G and Abbott, Sam and Barnard, Rosanna C and Jarvis, Christopher I and Kucharski, Adam J and Munday, James D and Pearson, Carl AB and Russell, Timothy W and Tully, Damien C and Washburne, Alex D and others},
  journal={Science},
  volume={372},
  OPTnumber={6538},
  OPTpages={eabg3055},
  year={2021},
  publisher={American Association for the Advancement of Science}
}

@article{barnett2023epidemic,
  title={Epidemic responses under uncertainty},
  author={Barnett, Michael and Buchak, Greg and Yannelis, Constantine},
  journal={Proceedings of the National Academy of Sciences},
  volume={120},
  number={2},
  pages={e2208111120},
  year={2023},
  publisher={National Academy of Sciences}
}

@article{rawlings2017model,
  title={Model predictive control with discrete actuators: Theory and application},
  author={Rawlings, James B and Risbeck, Michael J},
  journal={Automatica},
  volume={78},
  pages={258--265},
  year={2017},
  publisher={Elsevier}
}

@article{peni2020nonlinear,
  title={Nonlinear model predictive control with logic constraints for {COVID-19} management},
  author={P{\'e}ni, Tam{\'a}s and Csutak, Bal{\'a}zs and Szederk{\'e}nyi, G{\'a}bor and R{\"o}st, Gergely},
  journal={Nonlinear Dynamics},
  volume={102},
  number={4},
  pages={1965--1986},
  year={2020},
  publisher={Springer}
}

@article{armaou2022designing,
  title={Designing social distancing policies for the {COVID-19} pandemic: A probabilistic model predictive control approach},
  author={Armaou, Antonis and Katch, Bryce and Russo, Lucia and Siettos, Constantinos},
  journal={arXiv preprint arXiv:2202.00924},
  year={2022}
}

@article{birge2022controlling,
  title={Controlling epidemic spread: Reducing economic losses with targeted closures},
  author={Birge, John R and Candogan, Ozan and Feng, Yiding},
  journal={Management Science},
  volume={68},
  number={5},
  pages={3175--3195},
  year={2022},
  publisher={INFORMS}
}

@article{ames2017control,
  title={Control barrier function based quadratic programs for safety critical systems},
  author={Ames, Aaron D and Xu, Xiangru and Grizzle, Jessy W and Tabuada, Paulo},
  journal={IEEE Transactions on Automatic Control},
  volume={62},
  number={8},
  pages={3861--3876},
  year={2016},
  publisher={IEEE}
}

@article{mayne2000constrained,
  title={Constrained model predictive control: Stability and optimality},
  author={Mayne, David Q and Rawlings, James B and Rao, Christopher V and Scokaert, Pierre OM},
  journal={Automatica},
  volume={36},
  number={6},
  pages={789--814},
  year={2000},
  publisher={Elsevier}
}

@article{bertsekas1997nonlinear,
  title={Nonlinear programming},
  author={Bertsekas, Dimitri P},
  journal={Journal of the Operational Research Society},
  volume={48},
  number={3},
  pages={334--334},
  year={1997},
  publisher={Taylor \& Francis}
}

@article{esterhuizen2024mpc,
  title={MPC without terminal ingredients tailored to the SEIR compartmental epidemic model},
  author={Esterhuizen, Willem and Sauerteig, Philipp and Streif, Stefan and Worthmann, Karl},
  journal={Systems \& Control Letters},
  volume={193},
  pages={105908},
  year={2024},
  publisher={Elsevier}
}

@article{gemignani2026real,
  title={Real-time responses to epidemics: A Reinforcement-Learning approach},
  author={Gemignani, Gabriele and d'Onofrio, Alberto and Landi, Alberto and Pisaneschi, Giulio and Manfredi, Piero},
  journal={Mathematical Biosciences and Engineering},
  volume={23},
  number={3},
  pages={753--775},
  year={2026}
}

\end{document}